\newtheorem{theorem}{Theorem}[section]
\newtheorem{definition}[theorem]{Definition}
\newtheorem{proposition}[theorem]{Proposition}
\newtheorem{conjecture}[theorem]{Conjecture}
\begin{document}

\title[Maximal torus theory]{Maximal torus theory for compact quantum groups}

\author{Teodor Banica}
\address{T.B.: Department of Mathematics, Cergy-Pontoise University, 95000 Cergy-Pontoise, France. {\tt teo.banica@gmail.com}}

\author{Issan Patri}
\address{I.P.: Chennai Mathematical Institute, H1 Sipcot IT Park, Siruseri, Kelambakkam - 603103, India. {\tt ipatri@cmi.ac.in}}

\subjclass[2000]{46L65}
\keywords{Quantum group, Maximal torus}

\begin{abstract}
Associated to any compact quantum group $G\subset U_N^+$ is a canonical family of group dual subgroups $\widehat{\Gamma}_Q\subset G$, parametrized by unitaries $Q\in U_N$, playing the role of ``maximal tori'' for $G$. We present here a series of conjectures, relating the various algebraic and analytic properties of $G$ to those of the family $\{\widehat{\Gamma}_Q|Q\in U_N\}$.
\end{abstract}

\maketitle

\section*{Introduction}

We investigate here the notion of ``maximal torus'' for the compact quantum groups. In general, the maximal torus does not really exist. Given a closed subgroup $G\subset U_N^+$, what does exist, however, is a family of group dual subgroups $\widehat{\Gamma}_Q\subset G$, parametrized by unitaries $Q\in U_N$, which altogether play the role of the maximal torus. 

The construction, which goes back to \cite{bbd}, \cite{bv3}, is very simple, as follows:
$$C^*(\Gamma_Q)=C(G)\Big/\Big<(QuQ^*)_{ij}=0,\forall i\neq j\Big>$$

Here $u$ is the fundamental corepresentation of $G$, and the key observation is that the elements $g_i=(QuQ^*)_{ii}$ are group-like in the quotient algebra on the right. 
 
Based on growing evidence, coming from the recent quantum group literature, we will formulate here a series of conjectures, relating the various algebraic and analytic properties of $G$ to those of the family $\{\widehat{\Gamma}_Q|Q\in U_N\}$. Our main statements are as follows:
\begin{enumerate}
\item Character conjecture: Assuming that $G$ is connected, any nonzero element $P\in C(G)_{central}$ is left nonzero in one of the quotients $C^*(\Gamma_Q)$.

\item Amenability conjecture: The discrete quantum group $\widehat{G}$ is amenable if and only if all the discrete groups $\Gamma_Q$ are amenable.

\item Growth conjecture: The discrete quantum group $\widehat{G}$ has polynomial growth if and only if each $\Gamma_Q$ has polynomial growth.
\end{enumerate}

We believe all these conjectures to be true, non-trivial, and of course, of interest.

The paper is organized as follows: 1-2 are preliminary sections, in 3-4 we comment on the above conjectures, and in 5-6 we discuss Tannakian aspects.

\medskip

\noindent {\bf Acknowledgements.} T.B. would like to thank Minouille, wherever he is now, for spiritual guidance, and for everything. I.P. would like to thank Pierre Fima and Paris 7 University, for warm hospitality, at some early stage of this project.

\section{Maximal tori}

We use Woronowicz's compact quantum group formalism in \cite{wo1}, \cite{wo2}, with the extra axiom $S^2=id$. The precise definition that we will need is as follows:

\begin{definition}
If $A$ is a unital $C^*$-algebra, and $u\in M_N(A)$ is a unitary matrix, whose entries generate $A$, such that the following formulae define morphisms of $C^*$-algebras,
$$\Delta(u_{ij})=\sum_ku_{ik}\otimes u_{kj}\quad,\quad\varepsilon(u_{ij})=\delta_{ij}\quad,\quad S(u_{ij})=u_{ji}^*$$
we write $A=C(G)$, and call $G$ a compact matrix quantum group.
\end{definition}

The above maps $\Delta,\varepsilon,S$ are called comultiplication, counit and antipode. The basic examples include the compact Lie groups $G\subset U_N$, their $q$-deformations at $q=-1$, and the duals of the finitely generated discrete groups $\Gamma=<g_1,\ldots,g_N>$. See \cite{ntu}, \cite{wo1}.

There are many notions and constructions from the theory of compact groups which extend to the above setting. As an example here, having a closed quantum subgroup $H\subset G$ means to have a surjective morphism of $C^*$-algebras $\pi:C(G)\to C(H)$, mapping the standard generators of $C(G)$ to the standard generators of $C(H)$. See \cite{wo1}.

We have the following key construction, due to Wang \cite{wa1}:

\begin{definition}
The quantum groups $O_N^+,U_N^+$ constructed via
\begin{eqnarray*}
C(O_N^+)&=&C^*\left((u_{ij})_{i,j=1,\ldots,N}\Big|u=\bar{u},u^t=u^{-1}\right)\\
C(U_N^+)&=&C^*\left((u_{ij})_{i,j=1,\ldots,N}\Big|u^*=u^{-1},u^t=\bar{u}^{-1}\right)
\end{eqnarray*}
where $(\bar{u})_{ij}=u_{ij}^*$, $(u^t)_{ij}=u_{ji}$, $(u^*)_{ij}=u_{ji}^*$, are called free analogues of $O_N,U_N$.
\end{definition}

Here the existence of the morphisms $\Delta,\varepsilon,S$, as in Definition 1.1, comes from the universality property of the above algebras. Observe that by dividing by the commutator ideal, we obtain respectively the algebras $C(O_N),C(U_N)$. Thus, we have inclusions $O_N\subset O_N^+,U_N\subset U_N^+$. These inclusions are far from being isomorphisms. See \cite{wa1}.

With the definition of closed quantum subgroups given above, the quantum groups $G$ appearing as in Definition 1.1 are exactly the closed subgroups $G\subset U_N^+$. See \cite{wa1}.

The notion of diagonal subgroup goes back to \cite{bv3}. The idea is very simple:

\begin{definition}
Given a closed subgroup $G\subset U_N^+$, we set
$$C^*(\Gamma_1)=C(G)\Big/\Big<u_{ij}=0,\forall i\neq j\Big>$$ 
with $u\in M_N(C(G))$ being the fundamental corepresentation. 
\end{definition}

As explained in \cite{bv3}, the above quotient algebra is indeed cocommutative, and its generators $g_i=u_{ii}$ are group-like. Thus, $\Gamma_1=<g_1,\ldots,g_N>$ is a usual discrete group. Observe that $C^*(\Gamma_1)$ is not necessarily the full group algebra of $\Gamma_1$, but rather a certain quotient of it. For simplicity of presentation, we will use however the notation $C^*(\Gamma_1)$.

Observe that in the classical case, $G\subset U_N$, we obtain in this way the dual of the diagonal torus, $\Gamma_1=\widehat{G\cap\mathbb T^N}$, with $\mathbb T^N\subset U_N$ being the group of diagonal unitaries.

A key extension of the above construction, obtained by using a ``spinning'' matrix $Q\in U_N$, was proposed in \cite{bbd}. The idea is once again very simple, as follows:

\begin{definition}
Given a closed subgroup $G\subset U_N^+$, and a matrix $Q\in U_N$, we set
$$C^*(\Gamma_Q)=C(G)\Big/\Big<v_{ij}=0,\forall i\neq j\Big>$$ 
where $v=QuQ^*$, with $u\in M_N(C(G))$ being the fundamental corepresentation. 
\end{definition}

Observe that for the identity matrix $Q=1$ we obtain indeed the discrete group $\Gamma_1$ from Definition 1.3. In general, we do not really have a new notion here, because $\Gamma_Q$ is nothing but the group $\Gamma_1$ from Definition 1.3, taken for the quantum group $(G,v)$. 

The theoretical interest in this slight generalization of Definition 1.3 comes from the following fundamental result, due to Woronowicz \cite{wo1}:

\begin{theorem}
Any group dual subgroup $\widehat{\Lambda}\subset G$ must appear as 
$$\widehat{\Lambda}\subset\widehat{\Gamma}_Q\subset G$$
for a certain matrix $Q\in U_N$. 
\end{theorem}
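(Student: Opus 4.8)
The plan is to reduce the statement to Woronowicz's Peter--Weyl theory from \cite{wo1}. By the notion of closed quantum subgroup recalled above, a group dual subgroup $\widehat{\Lambda}\subset G$ is the same thing as a surjective morphism of $C^*$-algebras $\pi:C(G)\to C^*(\Lambda)$ which intertwines the comultiplications and sends standard generators to standard generators. Since $u\in M_N(C(G))$ is unitary and $\pi$ is a $*$-morphism applied entrywise, the matrix $w=\pi(u)\in M_N(C^*(\Lambda))$ is again unitary, and the relation $(\pi\otimes\pi)\Delta=\Delta\pi$ forces $\Delta(w_{ij})=\sum_kw_{ik}\otimes w_{kj}$, so that $w$ is a finite-dimensional unitary corepresentation of the compact quantum group $\widehat{\Lambda}$.

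Next I would invoke complete reducibility: by \cite{wo1}, a finite-dimensional unitary corepresentation of a compact quantum group decomposes as a direct sum of irreducible subcorepresentations, which may be taken mutually orthogonal. For the group dual $\widehat{\Lambda}$ the Hopf algebra $C^*(\Lambda)$ is cocommutative, so its simple subcoalgebras are one-dimensional; hence all irreducible pieces of $w$ are one-dimensional, and a one-dimensional unitary corepresentation of $C^*(\Lambda)$ is precisely a group-like unitary, that is, an element of $\Lambda$. Therefore there is a unitary $Q\in U_N$ such that
$$QwQ^*=\mathrm{diag}(g_1,\dots,g_N),\qquad g_1,\dots,g_N\in\Lambda.$$

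With this $Q$ I form $v=QuQ^*\in M_N(C(G))$, which is exactly the matrix entering Definition 1.4 for this value of $Q$. Applying $\pi$ entrywise gives $\pi(v)=Q\,\pi(u)\,Q^*=QwQ^*$, a diagonal matrix, so $\pi(v_{ij})=0$ whenever $i\neq j$. Thus $\pi$ kills the ideal $\big<v_{ij}\mid i\neq j\big>$ and factors as $C(G)\to C^*(\Gamma_Q)\stackrel{\rho}{\to}C^*(\Lambda)$. The map $\rho$ is surjective since $\pi$ is, it preserves comultiplications, and it sends the standard generators $g_i=v_{ii}$ of $C^*(\Gamma_Q)$ to the group elements $g_i\in\Lambda$; hence it realizes an inclusion $\widehat{\Lambda}\subset\widehat{\Gamma}_Q$. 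Precomposing with the quotient map $C(G)\to C^*(\Gamma_Q)$, which realizes $\widehat{\Gamma}_Q\subset G$, we obtain the asserted chain $\widehat{\Lambda}\subset\widehat{\Gamma}_Q\subset G$.

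Essentially everything here is formal manipulation of Hopf quotients; the single place where structure theory is used, and the step I would take most care over, is the unitary diagonalization of $w$ --- and within it the point that the relevant one-dimensional unitary corepresentations of $C^*(\Lambda)$ are honest elements of the group $\Lambda$, rather than group-like elements of some larger completion. Unitarizability of $w$ is free here, as $w$ is unitary to begin with, and the diagonalizing matrix can be chosen in $U_N$ precisely because the irreducible subcorepresentations of a unitary corepresentation can be arranged to be mutually orthogonal.
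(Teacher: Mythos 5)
Your proposal is correct and follows essentially the same route as the paper: complete reducibility of finite-dimensional unitary corepresentations of $\widehat{\Lambda}$, the fact that the irreducible ones are $1$-dimensional and given by group elements of $\Lambda$, unitary diagonalization $QwQ^*=\mathrm{diag}(g_1,\ldots,g_N)$, and the resulting factorization of $\pi$ through $C^*(\Gamma_Q)$. You simply spell out in more detail the factorization step that the paper leaves implicit.
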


\begin{proof}
As explained in \cite{wo1}, the finite dimensional unitary corepresentations of $\widehat{\Lambda}$ are completely reducible, with the irreducible corepresentations being all 1-dimensional, corresponding to the group elements $g\in\Lambda$. Thus, such a corepresentation must be of the form $v=QwQ^*$, with $w=diag(g_1,\ldots,g_N)$ and $Q\in U_N$. We conclude that the embeddings $\widehat{\Lambda}\subset U_N^+$ come from the quotient maps $C(U_N^+)\to C^*(\Lambda)$ of type $u\to QwQ^*$, and so the subgroups $\widehat{\Lambda}\subset G\subset U_N^+$ must appear as in the statement. See \cite{bbd}, \cite{wo1}.
\end{proof}

We have as well the following related result, from \cite{bv3}:

\begin{proposition}
If $g_1,\ldots,g_N\in\Gamma_Q$ are pairwise distinct, then $\widehat{\Gamma}_Q\subset G$ is a maximal group dual subgroup, i.e. there is no bigger group dual subgroup $\widehat{\Gamma}_Q\subset\widehat{\Lambda}\subset G$.
\end{proposition}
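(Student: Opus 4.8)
The plan is to show that any group dual subgroup $\widehat{\Lambda}$ sandwiched between $\widehat{\Gamma}_Q$ and $G$ must in fact coincide with $\widehat{\Gamma}_Q$. Concretely, suppose we have inclusions $\widehat{\Gamma}_Q\subset\widehat{\Lambda}\subset G$, corresponding on the algebra side to surjections $C(G)\to C^*(\Lambda)\to C^*(\Gamma_Q)$ whose composite is the defining quotient map of Definition 1.5. After conjugating the fundamental corepresentation by $Q$ — i.e. replacing $(G,u)$ by $(G,v)$ with $v=QuQ^*$, as remarked after Definition 1.5 — we may assume $Q=1$, so that $C^*(\Gamma_1)=C(G)/\langle u_{ij}=0,\ i\neq j\rangle$ with group-like generators $g_i=u_{ii}$, assumed pairwise distinct. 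The image $w$ of $u$ in $C^*(\Lambda)$ is, by Theorem 1.6 applied to $\widehat{\Lambda}$, of the form $w=RdR^*$ for some $R\in U_N$ and some diagonal matrix $d=\mathrm{diag}(h_1,\dots,h_N)$ with $h_i\in\Lambda$ group-like.

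The key step is then to analyze the composite quotient map $C^*(\Lambda)\to C^*(\Gamma_1)$. Under this map, the off-diagonal entries of $w=RdR^*$ go to zero and the diagonal entries $w_{ii}$ go to $g_i$. Writing out $w_{ii}=\sum_k |R_{ik}|^2 h_k$ and, for $i\neq j$, $w_{ij}=\sum_k R_{ik}\overline{R_{jk}}\, h_k$, the relation $w_{ij}\mapsto 0$ for all $i\neq j$ forces, in the group algebra $C^*(\Gamma_1)$, a linear dependence among the images of the $h_k$. Here is where I would use that the $g_i$ are pairwise distinct: distinct group-like elements in a group algebra are linearly independent, so the images of the $h_k$ are essentially a permutation of the $g_i$ (more precisely, each $h_k$ maps to some $g_{\sigma(k)}$, with the matrix of coefficients $|R_{ik}|^2$ being, after the identification, a permutation matrix — this is the standard argument that a unitary conjugating a diagonal matrix to another diagonal matrix with distinct entries is, up to phases, a permutation matrix, now carried out inside the group algebra). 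This shows the map $C^*(\Lambda)\to C^*(\Gamma_1)$ is injective on the group generated by the $h_i$, hence on all of $C^*(\Lambda)$, giving $\widehat{\Lambda}=\widehat{\Gamma}_1=\widehat{\Gamma}_Q$.

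I expect the main obstacle to be the bookkeeping around the phases and the permutation: one does not literally get that $R$ is a signed permutation matrix in $U_N$, but only a statement about the images of the $h_k$ in the quotient. The cleanest route is probably to observe that the relations defining $C^*(\Gamma_1)$ as a quotient of $C(G)$ can equally be pulled back through $C^*(\Lambda)$, and that since $w$ is already ``diagonalizable'' over $C^*(\Lambda)$ via $R$, imposing the vanishing of the $(QuQ^*)_{ij}=w_{ij}'$ (with $Q=1$) amounts to imposing relations on the $h_i$ that, given the distinctness of the $g_i$, are already satisfied in $C^*(\Lambda)$ — forcing the quotient to be trivial. I would also double-check the edge case where some $h_k$ map to the same $g_i$: the distinctness hypothesis on the $g_i$ is exactly what rules this out, since a repeated value among the $h_k$ combined with the unitarity of $R$ would produce a genuine off-diagonal relation not present upstairs, or collapse the count of distinct group-likes. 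Finally I would invoke Theorem 1.6 once more to phrase the conclusion intrinsically: no group dual subgroup strictly larger than $\widehat{\Gamma}_Q$ can sit inside $G$, which is the assertion.
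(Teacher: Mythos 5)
Your proposal is correct and follows essentially the same route as the paper: reduce to $Q=1$, diagonalize the fundamental corepresentation of $\widehat{\Lambda}$ via Theorem 1.5 as $R\,\mathrm{diag}(h_1,\ldots,h_N)R^*$, use linear independence of distinct group-like elements together with unitarity of $R$ to force $R$ to be a permutation matrix up to phases, and conclude that the relations defining $C^*(\Gamma_1)$ are already satisfied in $C^*(\Lambda)$. The paper packages the middle step as the commutation relation $R_{ij}(g_i-g_j)=0$ for $R=\sigma P$, but this is the same entrywise computation you carry out (and your closing remark that the quotient imposes no new relations is the right way to finish, rather than the looser ``injective on group-likes, hence on the algebra'').
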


\begin{proof}
By rotating, we can assume $Q=1$. Given a subgroup $\widehat{\Gamma}_1\subset\widehat{\Lambda}\subset G$, let us denote by $w\leftarrow v\leftarrow u$ the corresponding fundamental corepresentations. We have then $v=Pdiag(h_1,\ldots,h_N)P^*$, for a certain $P\in U_N$, where $h_1,\ldots,h_N$ are the generators of $\Lambda$. In addition, the quotient map $\Lambda\to\Gamma_1$ must send $h_i\to g_{\sigma(i)}$, for a certain permutation $\sigma\in S_N$. We deduce that $w=diag(g_1,\ldots,g_N)$ commutes with $R=\sigma P$, which reads $R_{ij}(g_i-g_j)=0$, and so $R_{ij}=0$ for $i\neq j$. But this gives $\Lambda=\Gamma_1$. See \cite{bv3}.
\end{proof}

The considerations in \cite{bv3} were motivated by the root systems for half-classical quantum groups. The general problem here, still open, can benefit from the systematic approach to the half-liberation in \cite{bdu}. We will be back to these topics in section 5 below.

As for the considerations in \cite{bbd}, these were motivated by a key rigidity conjecture in noncommutative geometry \cite{gos}, solved in the meantime by Goswami and Joardar \cite{gjo}. The main observation in \cite{bbd} was the fact that a non-classical group dual cannot act on a compact connected Riemannian manifold. We will be back to this in section 6 below.

\section{Basic examples}

In this section we discuss, following \cite{bbd}, some basic examples of the construction $(G,Q)\to\Gamma_Q$ from Definition 1.4. In the classical case, the result is as follows:

\begin{proposition}
For a closed subgroup $G\subset U_N$ we have
$$\widehat{\Gamma}_Q=G\cap(Q^*\mathbb T^NQ)$$
where $\mathbb T^N\subset U_N$ is the group of diagonal unitary matrices.
\end{proposition}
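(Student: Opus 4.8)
The plan is to unravel both sides of the claimed equality as concrete subsets of $U_N$ and check they coincide. First I would recall that for a classical group $G\subset U_N$, the algebra $C(G)$ is commutative, and the fundamental corepresentation $u$ is just the matrix of coordinate functions $u_{ij}(g)=g_{ij}$. The spinning matrix $Q\in U_N$ produces $v=QuQ^*$, whose entries are the coordinate functions $v_{ij}(g)=(QgQ^*)_{ij}$. So the defining ideal of $C^*(\Gamma_Q)$, namely $\langle v_{ij}=0,\ \forall i\neq j\rangle$, corresponds under Gelfand duality exactly to the closed subset $\{g\in G : (QgQ^*)_{ij}=0\ \forall i\neq j\}$, i.e. the set of $g\in G$ for which $QgQ^*$ is diagonal. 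This is precisely $G\cap Q^*\mathbb{T}^N Q$, since $QgQ^*\in\mathbb{T}^N$ iff $g\in Q^*\mathbb{T}^N Q$.

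The one genuine point to verify is that this set-theoretic quotient description really is the subgroup $\widehat{\Gamma}_Q$ as a \emph{quantum} subgroup, and that the quotient algebra is what the general (possibly noncommutative) construction of Definition 1.4 yields. Here I would argue that imposing commutativity on $C(G)$ and then passing to the quotient by the off-diagonal entries of $v$ commute with each other: $C(G)$ is already commutative, so the quotient $C^*(\Gamma_Q)=C(G)/\langle v_{ij}=0, i\neq j\rangle$ is automatically commutative, hence equals $C(H)$ for the closed subspace $H=\{g\in G: QgQ^*\text{ diagonal}\}$. One then checks $H$ is a subgroup: it is the preimage under $g\mapsto QgQ^*$ of the subgroup $\mathbb{T}^N\cap QGQ^*$, and conjugation by $Q$ is a group automorphism of $U_N$, so $H=Q^*(\mathbb{T}^N\cap QGQ^*)Q=G\cap Q^*\mathbb{T}^N Q$ is a closed subgroup of $U_N$. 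The comultiplication, counit and antipode on $C(H)$ inherited from $C(G)$ are the expected ones, so the inclusion $H\subset G$ is a closed quantum subgroup, and by construction $\widehat{\Gamma}_Q=H$.

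Finally I would confirm the ``dual'' bookkeeping: the generators $g_i=v_{ii}$ of $C^*(\Gamma_Q)$ become, on $H=G\cap Q^*\mathbb{T}^N Q$, the characters $h\mapsto (QhQ^*)_{ii}$, which are precisely the restrictions to $H$ of the coordinate characters of $\mathbb{T}^N$ conjugated by $Q$; these generate the Pontryagin dual group $\Gamma_Q=\widehat{H}$, consistently with $\widehat{\Gamma}_Q=H$. No step here is hard; the only place one must be a little careful is the passage between the possibly-noncommutative quotient of Definition 1.4 and its Gelfand spectrum, which is immediate once one observes that the ambient algebra $C(G)$ is already abelian. The result then also matches the special case $Q=1$, where it recovers $\Gamma_1=\widehat{G\cap\mathbb{T}^N}$ noted after Definition 1.3.
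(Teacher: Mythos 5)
Your proposal is correct and is essentially the paper's argument, just written out in more detail: the paper simply observes that at $Q=1$ the quotient is by definition $C(G\cap\mathbb T^N)$ and that the general case follows by conjugating by $Q$, which is exactly the Gelfand-duality identification you carry out directly. The extra checks you include (that the vanishing locus is a closed subgroup, and the Pontryagin bookkeeping identifying $\Gamma_Q$ with $\widehat{H}$) are correct and fill in what the paper leaves implicit.
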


\begin{proof}
This is indeed clear at $Q=1$, where $\Gamma_1$ appears by definition as the dual of the compact abelian group $G\cap\mathbb T^N$. In general, this follows by conjugating by $Q$.
\end{proof}

We denote by $F_N=<g_1,\ldots,g_N>$ the free group on $N$ generators. With this convention, here are the computations for Wang's quantum groups in \cite{wa1}:

\begin{proposition}
The construction $G\to\Gamma_Q$ is as follows:
\begin{enumerate}
\item For $G=U_N^+$ we obtain $\Gamma_Q=F_N$, for any $Q\in U_N$.

\item For $G=O_N^+$ we have $\Gamma_Q=F_N/<R_{ij}\neq0\implies g_ig_j=1>$, where $R=QQ^t$.
\end{enumerate}
\end{proposition}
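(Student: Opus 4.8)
The plan is to compute $C^*(\Gamma_Q)$ directly in each case, by transporting the defining relations of $C(G)$ through the substitution $v=QuQ^*$.

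For (1), I would first rename $v=QuQ^*$, which is a biunitary over any $C^*$-algebra as soon as $u$ is, since $Q\in U_N$ is a scalar unitary; moreover $w\mapsto QwQ^*$ is a bijection between biunitaries. Hence the algebra $C^*(\Gamma_Q)=C(U_N^+)/\langle v_{ij}=0,\,i\neq j\rangle$ is isomorphic to the universal $C^*$-algebra generated by a \emph{diagonal} biunitary $v=\mathrm{diag}(g_1,\dots,g_N)$. A diagonal matrix is biunitary precisely when each $g_i$ is unitary, so this algebra is the universal $C^*$-algebra on $N$ free unitaries, namely $C^*(F_N)$; note that the outcome visibly does not involve $Q$.

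For (2), I would keep $v=QuQ^*$, so that $u=Q^*vQ$, and pass the two relations $u=\bar u$ and $u^t=u^{-1}$ defining $C(O_N^+)$ to the quotient, where by \cite{bv3} one has $v=\mathrm{diag}(g_1,\dots,g_N)$ with the $g_i$ group-like, hence $\bar v=v^{-1}$. Using $\overline{Q^*}=Q^t$ and setting $R=QQ^t$, which is a symmetric unitary so that $\bar R=R^{-1}$, the relation $u=\bar u$ rewrites as $v=R\bar vR^{-1}$, i.e. $vR=Rv^{-1}$; comparing $(i,j)$ entries and using that $R_{ij}$ is a scalar, this reads $R_{ij}(g_i-g_j^{-1})=0$, that is $R_{ij}\neq 0\implies g_ig_j=1$. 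Likewise $uu^t=1$ rewrites as $vRv=R$, whose $(i,j)$ entry is $R_{ij}(g_ig_j-1)=0$, giving the same implication (and $u^tu=1$ yields nothing new). Hence every relation of $C(O_N^+)$ descends to the single family $R_{ij}\neq 0\implies g_ig_j=1$.

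To finish I would check that these relations are also sufficient, by exhibiting the inverse morphism: over the algebra $C^*(\Gamma)$ with $\Gamma=F_N/\langle R_{ij}\neq0\implies g_ig_j=1\rangle$, the matrix $Q^*\mathrm{diag}(g_1,\dots,g_N)Q$ is self-adjoint and orthogonal — precisely because the $g_i$ satisfy the relations above, as one sees by running the previous computation backwards — so the universality of $C(O_N^+)$ provides a morphism $C(O_N^+)\to C^*(\Gamma)$ under which $v=QuQ^*$ maps to a diagonal matrix, and which therefore factors through $C^*(\Gamma_Q)$; this inverts the tautological surjection $C^*(\Gamma)\to C^*(\Gamma_Q)$, $g_i\mapsto v_{ii}$, so $\Gamma_Q=\Gamma$. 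The only genuine difficulty I anticipate is bookkeeping — keeping the transposes, bars and inverses straight when conjugating by $Q$, so as to land on $R=QQ^t$ rather than $Q^tQ$ or $\bar QQ^*$ — the noncommutativity of the $g_i$ being harmless, since only the scalars $R_{ij}$ ever multiply them.
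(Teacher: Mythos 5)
Your proposal is correct and follows essentially the same route as the paper: part (1) via the automorphism $u\to QuQ^*$ of $C(U_N^+)$ reducing to the diagonal case, and part (2) via transporting the reality relation to $vR=R\bar vR^{-1}R=R\bar v$ with $R=QQ^t$, whose $(i,j)$ entry gives $R_{ij}\neq0\implies g_ig_j=1$. Your extra verifications (that $uu^t=1$ yields nothing new, and the converse universality argument showing the relations are sufficient) are sound and merely make explicit what the paper leaves implicit.
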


\begin{proof}
These results are well-known, the proof being as follows:

(1) At $Q=1$ this is clear, and in the general case $Q\in U_N$ this follows from $\Gamma_1=F_N$, and from the fact that $C(U_N^+)$ is isomorphic to itself via $u\to QuQ^*$.

(2) At $Q=1$ this is clear, and we obtain the group $\Gamma_1=\mathbb Z_2^{*N}$. In general now, with $v=QuQ^*$, and with $R=QQ^t$ as in the statement, we have:
$$\bar{v}=\overline{QuQ^*}=\bar{Q}\bar{u}Q^t=\bar{Q}uQ^t=\bar{Q}Q^*vQQ^t=R^*vR$$ 

Thus $\Gamma_Q$ is presented by the relations $\bar{w}=R^*wR$, with $w=diag(g_1,\ldots,g_N)$. But $(wR)_{ij}=(R\bar{w})_{ij}$ with reads $g_iR_{ij}=R_{ij}g_j^{-1}$, and this gives the result.
\end{proof}

Let us discuss now the group dual case. In the framework of Definition 1.1, we can write as well $A=C^*(\Gamma)$, with $\Gamma$ being a finitely generated discrete quantum group. We have then the Pontrjagin duality formulae $G=\widehat{\Gamma}$ and $\Gamma=\widehat{G}$. See \cite{wo1}.

Following \cite{bbd}, we first have the following result:

\begin{proposition}
Given a discrete group $\Gamma=<g_1,\ldots,g_N>$, consider its dual compact quantum group $G=\widehat{\Gamma}$, diagonally embedded into $U_N^+$. We have then
$$\Gamma_Q=\Gamma/<g_i=g_j|\exists k,Q_{ki}\neq0,Q_{kj}\neq0>$$
with the embedding $\widehat{\Gamma}_Q\subset G=\widehat{\Gamma}$ coming from the quotient map $\Gamma\to\Gamma_Q$.
\end{proposition}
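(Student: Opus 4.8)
The plan is to compute the quotient $C^*(\Gamma_Q)$ directly from the presentation in Definition 1.4, using that $C(G) = C^*(\Gamma)$ with $u = \mathrm{diag}(g_1,\dots,g_N)$ the diagonal embedding. Write $v = QuQ^*$, so that $v_{ij} = \sum_k Q_{ik}\overline{Q_{jk}}\, g_k$. The relations defining $C^*(\Gamma_Q)$ are $v_{ij} = 0$ for all $i \neq j$, i.e. $\sum_k Q_{ik}\overline{Q_{jk}}\, g_k = 0$ in the quotient algebra, for each pair $i \neq j$. So the first step is simply to unpack these relations and understand what they force on the group elements $g_k$.

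The key observation is that, since the $g_k$ are group-like (hence their images in any $*$-quotient are unitary and in particular nonzero whenever they survive), a relation of the form $\sum_k \lambda_k g_k = 0$ with scalar coefficients $\lambda_k$ cannot hold unless there is genuine collapsing among the $g_k$: more precisely, grouping the sum according to which $g_k$ become equal, the coefficients within each group must sum to zero. Here I would argue that the natural, and in fact the universal, way to satisfy all the relations $\sum_k Q_{ik}\overline{Q_{jk}}\, g_k = 0$ at once is to impose $g_i = g_j$ whenever there exists a $k$ with $Q_{ki} \neq 0$ and $Q_{kj} \neq 0$ — that is, whenever columns $i$ and $j$ of $Q$ have overlapping supports. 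Indeed, if we pass to the quotient group $\Lambda = \Gamma/\langle g_i = g_j \mid \exists k,\ Q_{ki}\neq 0,\ Q_{kj}\neq 0\rangle$, then within each equivalence class $C$ the surviving common element is multiplied by $\sum_{k \in C} Q_{ik}\overline{Q_{jk}}$, and one checks this vanishes: for fixed $i \neq j$, the sum $\sum_k Q_{ik}\overline{Q_{jk}} = (QQ^*)_{ij} = \delta_{ij} = 0$ by unitarity of $Q$, and moreover the nonzero terms in row $i$ (those $k$ with $Q_{ik}\neq 0$) all lie in a single equivalence class, since any two such indices $k, \ell$ are joined through row $i$; hence the full sum over that one class is already $0$. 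Wait — one must be careful that all indices $k$ appearing with nonzero coefficient in the $(i,j)$ relation do lie in one class; this holds because $Q_{ik}\neq 0$ and $Q_{i\ell}\neq 0$ means $k \sim \ell$ via $k$-th row\dots no, via the $i$-th row: $Q_{ik}\neq 0, Q_{i\ell}\neq 0$ gives $k \sim \ell$. So the relations are satisfied in $C^*(\Lambda)$, giving a surjection $C^*(\Gamma_Q) \to C^*(\Lambda)$.

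For the reverse direction I would show that the collapsing $g_i = g_j$ (for overlapping column supports) is \emph{forced} in $C^*(\Gamma_Q)$. Fix $k$ and two indices $i,j$ with $Q_{ki}\neq 0$ and $Q_{kj}\neq 0$. The point is that $v$ is a diagonal matrix in $C^*(\Gamma_Q)$, so $v = QuQ^*$ diagonal forces $Q^* v Q = u$ to have the same diagonalizing behaviour; concretely, from $v = QuQ^*$ diagonal we get $vQ = Qu$, i.e. $v_{ii} Q_{ik} = Q_{ik} g_k$ reading off the $(i,k)$ entry — so wherever $Q_{ik}\neq 0$ we get $v_{ii} = g_k$ in $C^*(\Gamma_Q)$ (after noting the surviving diagonal entries $v_{ii}$ are the group-like generators of $\Gamma_Q$). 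Thus if $Q_{ki}\neq 0$ and $Q_{kj}\neq 0$ then, applying this with the roles rearranged — i.e. reading row $k$ of the identity $Qu = vQ$: $\sum_\ell Q_{k\ell} g_\ell \cdot \delta = v_{kk} Q_{k\cdot}$\dots — we obtain $g_i = v_{kk} = g_j$. This shows the defining relations of $\Lambda$ hold in $C^*(\Gamma_Q)$, giving the reverse surjection, hence $C^*(\Gamma_Q) = C^*(\Lambda)$; the statement about the embedding $\widehat{\Gamma}_Q \subset \widehat{\Gamma}$ coming from $\Gamma \to \Gamma_Q$ is then immediate from the construction.

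The main obstacle I anticipate is bookkeeping the index combinatorics cleanly — making precise that "overlapping column supports of $Q$" generates exactly the right equivalence relation and that the $(i,j)$-relations contribute nothing more — and handling the subtlety, already flagged in the remark after Definition 1.3, that $C^*(\Gamma_Q)$ need not be the full group algebra, so I should phrase everything at the level of the universal $C^*$-algebra generated by the $g_i$ subject to the listed relations, rather than at the level of abstract groups, and only at the end identify the underlying group as $\Gamma_Q = \Gamma/\langle\,\cdots\,\rangle$.
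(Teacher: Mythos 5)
Your proof is correct, and its core step --- reading off the $(k,i)$ entry of $vQ=Qu$ in the quotient where $v$ is diagonal, to get $Q_{ki}\neq0\Rightarrow v_{kk}=g_i$ and hence $g_i=g_j$ whenever columns $i,j$ of $Q$ have overlapping support --- is exactly the paper's computation, which uses the transposed identity $Q^*v=uQ^*$ to reach the same conclusion $\bar{Q}_{ki}v_{kk}=\bar{Q}_{ki}g_i$. The paper dismisses the converse with ``the converse holds too, see [bbd]'', so your explicit verification that the relations $v_{ij}=0$ ($i\neq j$) already hold in the group algebra of the quotient group (all $k$ with $Q_{ik}\neq0$ lie in a single equivalence class, so $v_{ij}$ collapses to $\big(\sum_kQ_{ik}\bar{Q}_{jk}\big)g=(QQ^*)_{ij}\,g=0$ by unitarity) is a correct and worthwhile addition rather than a deviation.
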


\begin{proof}
Assume indeed that $\Gamma=<g_1,\ldots,g_N>$ is a discrete group, with $\widehat{\Gamma}\subset U_N^+$ coming via $u=diag(g_1,\ldots,g_N)$. With $v=QuQ^*$, we have:
$$\sum_s\bar{Q}_{si}v_{sk}=\sum_{st}\bar{Q}_{si}Q_{st}\bar{Q}_{kt}g_t=\sum_t\delta_{it}\bar{Q}_{kt}g_t=\bar{Q}_{ki}g_i$$

Thus $v_{ij}=0$ for $i\neq j$ gives $\bar{Q}_{ki}v_{kk}=\bar{Q}_{ki}g_i$, which is the same as saying that $Q_{ki}\neq0$ implies $g_i=v_{kk}$. But this latter equality reads $g_i=\sum_j|Q_{kj}|^2g_j$, and we conclude that $Q_{ki}\neq0,Q_{kj}\neq0$ implies $g_i=g_j$, as desired. The converse holds too, see \cite{bbd}.
\end{proof}

More generally, a similar result holds for the arbitrary (non-diagonal) embeddings of the duals of the discrete groups $G=\widehat{\Gamma}$ into $U_N^+$. For details here, we refer to \cite{bbd}.

In what follows we will regard the family $\{\Gamma_Q|Q\in U_N\}$ as a kind of ``bundle'' over the group $U_N$. Observe that, by Proposition 2.3, we cannot expect the correspondence $Q\to\Gamma_Q$ to have any reasonable continuity property, and so our ``bundle'' structure to fit into some known formalism. In addition, we have the following negative result:

\begin{proposition}
When $\Gamma=\widehat{G}$ is a classical group, the fibers $\Gamma_Q$ are trivial (in the sense that they are quotients of $\mathbb Z$), for generic values of $Q\in U_N$.
\end{proposition}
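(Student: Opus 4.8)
The plan is to reduce to the diagonal embedding case, where Proposition 2.3 (and its extension to non-diagonal embeddings, see \cite{bbd}) gives an explicit presentation of $\Gamma_Q$ as a quotient of $\Gamma$, and then to observe that a generic spinning matrix identifies all the generators, leaving a monogenic group.

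First I would recall that if $\Gamma$ is an ordinary discrete group, then any embedding $\widehat{\Gamma}\subset U_N^+$ arises from a fundamental corepresentation of the form $u=P\,diag(h_1,\ldots,h_N)\,P^*$, with $P\in U_N$ and $h_1,\ldots,h_N\in\Gamma$ a generating system; this is the reasoning behind Theorem 1.5, the $N$-dimensional unitary corepresentations of $\widehat{\Gamma}$ being completely reducible with all irreducibles $1$-dimensional, and the diagonal entries generating $\Gamma$ because the entries of $u$ generate $C(\widehat{\Gamma})=C^*(\Gamma)$. Now, given $Q\in U_N$, set $R=QP$; then
$$v=QuQ^*=QP\,diag(h_1,\ldots,h_N)\,P^*Q^*=R\,diag(h_1,\ldots,h_N)\,R^*,$$
so the group $\Gamma_Q$ attached to $(\widehat{\Gamma},u)$ coincides with the one attached to the diagonal embedding of $\Gamma$ spun by $R$. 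Proposition 2.3 therefore yields
$$\Gamma_Q=\Gamma\Big/\Big<h_i=h_j\,|\,\exists\,k,\ R_{ki}\neq0,\ R_{kj}\neq0\Big>.$$

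Finally, as $Q$ runs over $U_N$ so does $R=QP$ (right translation is a homeomorphism), hence it suffices to show that $\Gamma_Q$ is monogenic whenever $R$ has all entries nonzero. But the set of $R\in U_N$ with some vanishing entry is $\bigcup_{k,i}\{R:R_{ki}=0\}$, a finite union of proper real-analytic subsets of $U_N$ — proper since, e.g., the rescaled Fourier matrix has no zero entry — hence closed, nowhere dense, and of zero Haar measure. For $Q$ outside the corresponding exceptional set (a dense open set of full Haar measure), every $R_{ki}$ is nonzero, so taking $k=1$ in the presentation above forces $h_i=h_j$ for all $i,j$; thus $\Gamma_Q$ is generated by the common image $h$ of $h_1,\ldots,h_N$, and is a quotient of $\mathbb Z$, as claimed.

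The only delicate point is the reduction step, namely that every group-dual embedding $\widehat{\Gamma}\subset U_N^+$ is conjugate to a diagonal one; this rests squarely on complete reducibility of the corepresentations of a group dual and on the $1$-dimensionality of their irreducibles, and is exactly the content recalled after Theorem 1.5 and in \cite{bbd}. Everything after that is a one-line application of Proposition 2.3 together with the elementary remark that a unitary matrix with a prescribed zero entry lies on a proper subvariety of $U_N$. One may also add the interpretive comment that this is a genuine limitation of the family $\{\Gamma_Q\}$: away from a measure-zero set of spinning matrices the fibers remember nothing of $\Gamma$ beyond an abelian monogenic quotient, so that all the arithmetic of $\Gamma$ is carried by the exceptional fibers, near $Q=1$.
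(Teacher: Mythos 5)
Your argument is correct and follows essentially the same route as the paper, which simply invokes Proposition 2.3 and the observation that a Haar-generic $Q\in U_N$ has no vanishing entries, forcing all generators to be identified. The extra details you supply -- the reduction of a general embedding $u=P\,diag(h_1,\ldots,h_N)P^*$ to the diagonal case via $R=QP$, and the remark that $\{R:R_{ki}=0\}$ is a proper real-analytic subset of $U_N$, hence Haar-null and nowhere dense -- are exactly the steps the paper leaves implicit.
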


\begin{proof}
This follows indeed from Proposition 2.3. To be more precise, we obtain that $\Gamma_Q$ is trivial, with probability 1, with respect to the Haar measure on $U_N$.
\end{proof}

The above result cuts short any attempt of using probabilistic tools, in order to ``average'' our family of groups $\{\Gamma_Q|Q\in U_N\}$. Would the fibers have been generically non-trival, we could have probably used \cite{csn} in order to average the various numeric invariants of $\Gamma_Q$, in order to obtain a unique, formal ``maximal torus''. But, this is not the case.

Following now \cite{fsk}, we have as well the following result, which can provide counterexamples to some other various naive conjectures which can be made:

\begin{proposition}
For the quantum group of Kac-Paljutkin, and for its generalizations by Sekine, the family $\{\Gamma_Q|Q\in U_N\}$ consists of abelian groups. 
\end{proposition}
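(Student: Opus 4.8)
The plan is to reduce the statement to an explicit computation with the known fusion/representation data of the Kac--Paljutkin quantum group and the Sekine family, and then to apply the group-dual machinery already developed. First I would recall that each Sekine quantum group $G_k$ has a fundamental-type representation of some fixed dimension $N$ (for Kac--Paljutkin, $N=4$); the point is that $C(G_k)$ is built from a finite list of generators with explicit multiplication and comultiplication rules, so the quotient $C^*(\Gamma_Q) = C(G_k)/\langle (QuQ^*)_{ij}=0,\ i\neq j\rangle$ can in principle be written down for every $Q\in U_N$. The elements $g_i=(QuQ^*)_{ii}$ are group-like in this quotient by the key observation from the Introduction, so $\Gamma_Q$ is generated by the $g_i$.

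The key step is to show that the $g_i$ commute in every such quotient. I would argue this structurally rather than by brute force over all $Q$: the quotient $C^*(\Gamma_Q)$ is a quotient of $C(G_k)$, hence a finite-dimensional (indeed finite-quantum-group) algebra that is simultaneously cocommutative, so it is the group algebra of a \emph{finite} group, namely a subquotient of the ``classical part'' of $G_k$. Concretely, $\Gamma_Q$ is a quotient of the diagonal group $\Gamma_1$ attached to the representation $v=QuQ^*$, and since $G_k$ sits in an extension $1\to \widehat{\mathbb Z_k^2}\to G_k\to\mathbb Z_2\to 1$ (the Kac--Paljutkin/Sekine presentation), any classical quotient of $G_k$ — in particular any group dual subgroup, viewed through the duality $\widehat{\Gamma}_Q\subset G_k$ — is abelian, being built from the abelian pieces $\mathbb Z_k^2$ and $\mathbb Z_2$. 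Thus $\Gamma_Q$ is abelian for every $Q$.

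The remaining steps are bookkeeping: (i) identify, for $G_k$, the fundamental corepresentation $u$ and its matrix entries explicitly, so that the claim ``$\Gamma_Q$ is a subquotient of the classical part of $G_k$'' is justified directly from Definition 1.4 and not merely by analogy; (ii) note that Theorem 1.5 guarantees every group dual subgroup of $G_k$ arises as some $\widehat{\Gamma}_Q$, so no fibers are missed; and (iii) invoke \cite{fsk}, where this computation is carried out, to pin down precisely which abelian groups occur (they are quotients of $\mathbb Z_k\times\mathbb Z_k$ or of $\mathbb Z_2\times\mathbb Z_k$-type data, depending on $Q$). I expect the main obstacle to be step (i): making the reduction ``cocommutative finite quotient of $G_k$ $\Rightarrow$ abelian'' fully rigorous requires knowing that $G_k$ has \emph{no} non-abelian classical subgroup, which in turn rests on the explicit extension structure of the Sekine algebras; once that structural fact is in hand, the rest is immediate.
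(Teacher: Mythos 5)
Your reduction to ``every group dual subgroup of the Sekine quantum groups is abelian'' is the right target, and your step (ii) (Theorem 1.5 guarantees no fibers are missed) is fine. But the structural argument you offer for the key step does not work. The claim that $\Gamma_Q$ is abelian ``being built from the abelian pieces $\mathbb Z_k^2$ and $\mathbb Z_2$'' is not a valid inference: an extension of an abelian group by an abelian group need not be abelian ($S_3$, $D_4$, $Q_8$ are all such extensions). This is not a hypothetical worry here -- the Kac--Paljutkin algebra is $\mathbb C^4\oplus M_2(\mathbb C)$, which is exactly the algebra structure of $\mathbb C[D_4]$ and $\mathbb C[Q_8]$, so ruling out a copy of $\widehat{D_4}$ or $\widehat{Q_8}$ inside $G$ is precisely the nontrivial content one has to prove; it cannot be read off from the extension $1\to\widehat{\mathbb Z_k^2}\to G_k\to\mathbb Z_2\to 1$ alone. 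You acknowledge this obstacle at the end, but resolving it is the whole proof, and what resolves it is the explicit classification of the group dual subgroups (equivalently, of the relevant idempotent states) of the Sekine algebras carried out by Franz and Skalski in \cite{fsk} -- which is exactly, and only, what the paper's own proof invokes. So once the flawed middle paragraph is removed, your argument collapses to the paper's citation.

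A secondary point: you conflate ``classical quotient of $G_k$'' with ``group dual subgroup of $G_k$''. In the paper's conventions a group dual subgroup $\widehat{\Gamma}_Q\subset G$ is a surjection $C(G)\to C^*(\Gamma_Q)$ onto a \emph{cocommutative} quotient; it is a classical object (i.e., the dual of an abelian group, hence itself a compact abelian group) only when $C^*(\Gamma_Q)$ is also commutative -- which is the very statement to be proved. Treating the group dual subgroups as ``classical quotients'' from the outset therefore begs the question.
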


\begin{proof}
This follows from the results of Franz and Skalski in \cite{fsk}, who classified all the group dual subgroups of these quantum groups, from \cite{sek}. See \cite{bbd}.
\end{proof}

Finally, we have the following result, regarding Wang's free analogue of the symmetric group \cite{wa2}, which combines various findings from \cite{bbd}, \cite{bi1}, and which is perhaps the most illustrating, for the various phenomena that can appear:

\begin{theorem}
For the quantum permutation group $G=S_N^+$, we have:
\begin{enumerate}
\item Given $Q\in U_N$, the quotient $F_N\to\Gamma_Q$ comes from the following relations:
$$\begin{cases}
g_i=1&{\rm if}\ \sum_lQ_{il}\neq 0\\
g_ig_j=1&{\rm if}\ \sum_lQ_{il}Q_{jl}\neq 0\\ 
g_ig_jg_k=1&{\rm if}\ \sum_lQ_{il}Q_{jl}Q_{kl}\neq 0
\end{cases}$$

\item Given a decomposition $N=N_1+\ldots+N_k$, for the matrix $Q=diag(F_{N_1},\ldots,F_{N_k})$, where $F_N=\frac{1}{\sqrt{N}}(\xi^{ij})_{ij}$ with $\xi=e^{2\pi i/N}$ is the Fourier matrix, we obtain: 
$$\Gamma_Q=\mathbb Z_{N_1}*\ldots*\mathbb Z_{N_k}$$

\item Given an arbitrary matrix $Q\in U_N$, there exists a decomposition $N=N_1+\ldots+N_k$, such that $\Gamma_Q$ appears as quotient of $\mathbb Z_{N_1}*\ldots*\mathbb Z_{N_k}$.
\end{enumerate}
\end{theorem}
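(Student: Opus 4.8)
The plan is to reduce (2) and (3) to (1), and to prove (1) by Woronowicz--Tannaka duality together with the non-crossing partition combinatorics governing $S_N^+$. Set $v=QuQ^*$ and $g_i=v_{ii}$, and recall that for $S_N^+$ the intertwiner spaces $\mathrm{Hom}(u^{\otimes a},u^{\otimes b})$ are spanned by the partition maps $T_\pi$, with $\pi$ running over non-crossing partitions; in particular, for each $p\le 3$ the diagonal vector $\zeta_p=\sum_i e_i^{\otimes p}$ is fixed by $u^{\otimes p}$, as it is the map associated with the one-block partition of $\{1,\dots,p\}$. Transporting by $Q$, the vector $(Q^{\otimes p})\zeta_p=\sum_m(Qe_m)^{\otimes p}$ is fixed by $v^{\otimes p}$, with $(i_1,\dots,i_p)$-component $\sum_m Q_{i_1m}\cdots Q_{i_pm}$. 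Now pass to the quotient $C^*(\Gamma_Q)$, in which $v=\mathrm{diag}(g_1,\dots,g_N)$, so $v^{\otimes p}$ is diagonal with entry $g_{i_1}\cdots g_{i_p}$ at position $(i_1,\dots,i_p)$; a diagonal operator fixes a vector only on the support where its eigenvalue is $1$, so each nonzero component forces $g_{i_1}\cdots g_{i_p}=1$. For $p=1,2,3$ this produces precisely the three displayed families of relations (for $p=1$ one recovers that the all-ones vector, transported, is fixed).

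The real content of (1) is that these relations generate all relations of $\Gamma_Q$. One inclusion is the computation above; for the converse I would exhibit an inverse to the canonical surjection $C^*(\Lambda)\to C^*(\Gamma_Q)$, where $\Lambda:=F_N/\langle\text{relations }1,2,3\rangle$. Concretely, I would show that $\tilde u:=Q^*\,\mathrm{diag}(g_1,\dots,g_N)\,Q$, with entries $\tilde u_{kl}=\sum_i\bar Q_{ik}Q_{il}g_i$, is a magic unitary over $C^*(\Lambda)$: the row and column sums are $1$ by relation $1$ (using $\sum_i\bar Q_{ik}Q_{il}=\delta_{kl}$), while the selfadjointness, idempotency and orthogonality of the $\tilde u_{kl}$ follow from relations $2$ and $3$ — relation $3$ being exactly what lets one reduce the degree-two products $g_ig_j$ appearing in $\tilde u_{kl}^2$ and $\tilde u_{kl}\tilde u_{km}$. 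Granting this, $u\mapsto \tilde u$ defines a surjective $*$-morphism $C(S_N^+)\to C^*(\Lambda)$ which kills $\langle v_{ij}=0,\ i\ne j\rangle$ and hence factors through $C^*(\Gamma_Q)$, giving the desired inverse; thus $\Gamma_Q=\Lambda$. I expect the verification that relations $1$--$3$, and nothing more, make $\tilde u$ magic to be the main obstacle; this is the part drawn from \cite{bi1}.

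Part (2) then follows by direct computation. For $Q=\mathrm{diag}(F_{N_1},\dots,F_{N_k})$, block-diagonality forces every sum $\sum_l Q_{i_1l}\cdots Q_{i_pl}$ to vanish unless $i_1,\dots,i_p$ lie in a common block, so relations $1$--$3$ decouple over the blocks; inside block $r$, writing the generators as $g_{r,0},\dots,g_{r,N_r-1}$, the relevant sums become $N_r^{-p/2}\sum_{l=0}^{N_r-1}\xi_r^{(a_1+\cdots+a_p)l}$ with $\xi_r=e^{2\pi i/N_r}$, which is nonzero exactly when $a_1+\cdots+a_p\equiv 0\ (\mathrm{mod}\ N_r)$. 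Hence the block-$r$ relations are $g_{r,0}=1$, together with $g_{r,a}g_{r,b}=1$ and $g_{r,a}g_{r,b}g_{r,c}=1$ whenever the indices sum to $0$ mod $N_r$; setting $c_r=g_{r,1}$ and using the degree-three relation with $b=1$ gives $g_{r,a+1}=g_{r,a}c_r$, so $g_{r,a}=c_r^a$, and then $c_r^{N_r}=1$, identifying the block-$r$ group with $\mathbb Z_{N_r}$. As distinct blocks share no relations, $\Gamma_Q=\mathbb Z_{N_1}*\cdots*\mathbb Z_{N_k}$, where (1) is used to rule out further relations.

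For (3) I would again invoke (1), and take $N=N_1+\cdots+N_k$ to be the sizes of the classes of the equivalence relation on $\{1,\dots,N\}$ generated by the pairs of indices linked by some active relation among $1$, $2$, $3$. By construction no relation links generators in distinct classes, so $\Gamma_Q$ is a free product over these classes, and it remains to show that the group attached to each class is a quotient of a cyclic group of order at most the class size — the cyclic structure being forced by the degree-three relations, which is the step that genuinely uses $S_N^+$ rather than merely $S_N^+\subset O_N^+$. I expect this last point — establishing that each block group is cyclic, which needs a finer analysis of the submatrix of $Q$ attached to a block, as in \cite{bbd}, \cite{bi1} — to be the main difficulty of part (3).
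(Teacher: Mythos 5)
Your parts (1) and (2) follow essentially the route the paper has in mind: the forward half of (1) is exactly the transport of the one-block partition vectors $\xi_p=\sum_ie_i^{\otimes p}$, $p\le 3$, by $Q^{\otimes p}$ (this is the computation that Theorem 6.5 generalizes), and (2) is the direct Fourier-matrix computation from \cite{bbd}; your block-decoupling and the identification of each block group with $\mathbb Z_{N_r}$ are correct, including the use of all orderings of the triples to force commutation inside a block. One caveat on the converse half of (1): the step you defer is more delicate than you suggest. Relation 3 controls the unconjugated sums $\sum_lQ_{il}Q_{jl}Q_{kl}$, whereas idempotency and orthogonality of the entries of $\tilde u=Q^*\,\mathrm{diag}(g_1,\ldots,g_N)\,Q$ require the ``fork'' implication $\sum_l\bar Q_{il}Q_{jl}Q_{kl}\neq0\Rightarrow g_jg_k=g_i$. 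This must first be derived from relations 2 and 3 combined with unitarity of $Q$ (for instance via $\bar Q=\overline{QQ^t}\,Q$ and relation 2, or more conceptually by Frobenius rotation of $\xi_3$, which is legitimate once relation 2 has made $\tilde u$ real orthogonal). With that extra lemma your magic-unitary verification goes through, so this gap is fillable.

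Part (3) is where the genuine gap lies. The decomposition you propose --- equivalence classes of indices linked by an active relation, plus the claim that each class generates a cyclic group ``forced by the degree-three relations'' --- does not work at the level of generality at which you argue it. A class $\{i,j,k\}$ carrying only the relations $g_{\sigma(i)}g_{\sigma(j)}g_{\sigma(k)}=1$ generates $\mathbb Z^3/\langle(1,1,1)\rangle\cong\mathbb Z^2$, which is not a quotient of any $\mathbb Z_{N_1}*\cdots*\mathbb Z_{N_r}$ with $N_1+\cdots+N_r=3$. So cyclicity of the block groups is not a formal consequence of the shape of relations 1--3; ruling out such configurations requires using the unitarity of $Q$ in an essential way beyond reading off which sums $\sum_lQ_{i_1l}\cdots Q_{i_pl}$ vanish. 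This is precisely the content of Bichon's theorem in \cite{bi1} (the group dual subgroups of $S_N^+$ are the duals of quotients of $\mathbb Z_{N_1}*\cdots*\mathbb Z_{N_k}$), whose proof analyzes the rows of the magic unitary $Q^*\,\mathrm{diag}(g_i)\,Q$ as partitions of unity by projections inside $C^*(\Gamma_Q)$ together with the coaction; the ``finer analysis of the submatrix of $Q$'' you allude to is the whole theorem, not a routine verification. The paper itself simply cites \cite{bbd} and \cite{bi1} for this point; you should either do the same or reconstruct that argument, as your current sketch would not close.
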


\begin{proof}
Here (1) was obtained in \cite{bbd}, via a computation that we will generalize later on, and (2) was obtained as well in \cite{bbd}, via a direct computation. As for (3), the result here goes back to Bichon's work in \cite{bi1}, and can be obtained as well from (1,2). See \cite{bbd}. 
\end{proof}

The above result is quite interesting for us, because it shows that the fibers $\Gamma_Q$ not only wildly vary with $Q$, but are also not subject to quotient maps between them. This phenomenon holds of course as well for $S_N$ itself, but the extension to $S_N^+$ is of interest, because at $N\geq4$ this quantum group is known to be of ``continuous'' nature.

\section{The conjectures}

We present now our series of conjectures, relating the various algebraic and analytic properties of a compact quantum group $G\subset U_N^+$ to those of the family of group duals $\{\widehat{\Gamma}_Q|Q\in U_N\}$, or, equivalently, to those of the family of groups $\{\Gamma_Q|Q\in U_N\}$. 

As already mentioned in the introduction, while the general philosophy for these conjectures is quite old, going back to \cite{bbd}, \cite{bv3}, the statements are new, based on a quite substantial amount of recent work in the area, that we will explain here.

Let us first recall that the characters of the finite dimensional representations of $G$ live in a certain subalgebra $C(G)_{central}\subset C(G)$. To be more precise, $C(G)_{central}$ is by definition the norm closure of the linear span of the characters of irreducible representations, known to be linearly independent. Equivalently, $C(G)_{central}$ is the $C^*$-subalgebra of the tensors in $C(G)\otimes C(G)$ which are symmetric under the comultiplication map $\Delta$ of $C(G)$. In other words, if we denote by $\Sigma$ the tensor flip map of $C(G)\otimes C(G)$, we have:
$$C(G)_{central}=\left\{a\in C(G)\Big|\Delta(a)=\Sigma\circ\Delta(a)\right\}$$

We refer to \cite{lem}, \cite{wo1} for the general theory here.

We recall as well that $G$ is said to be connected if it has no finite quantum group quotient $G\to F\neq\{1\}$. This condition is equivalent to the fact that the coefficient algebra $<r_{ij}>$ must be infinite dimensional, for any nontrivial irreducible unitary representation $r$. In the case of the group duals, $G=\widehat{\Gamma}$, this is the same as asking for $\Gamma$ to have no torsion. For various aspects of the theory here, we refer to \cite{btd},  \cite{cdp}, \cite{pat}, \cite{wa3}.

With this convention, our first conjecture is as follows:

\begin{conjecture}[Characters]
If $G\subset U_N^+$ is connected, for any nonzero $P\in C(G)_{central}$ there exists $Q\in U_N$ such that $\pi_Q:C(G)\to C^*(\Gamma_Q)$ has the property $\pi_Q(P)\neq0$.
\end{conjecture}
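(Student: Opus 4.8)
The plan is to first remove a trivial case, then reformulate the statement geometrically, and finally reduce it to a generation statement about maximal tori. Since $\widehat{\Gamma}_Q\subset G$ is a closed quantum subgroup, the counit $\varepsilon$ of $C(G)$ factors as $\varepsilon=\varepsilon_Q\circ\pi_Q$, where $\varepsilon_Q$ is the counit of $C^*(\Gamma_Q)$; as $\varepsilon(\chi_r)=\dim r$ on every irreducible character we get $\varepsilon_Q(\pi_Q(P))=\varepsilon(P)$ for all $Q$. Hence if $\varepsilon(P)\neq0$ there is nothing to prove, and we may assume $\varepsilon(P)=0$. Now $C(G)_{central}$ is a commutative $C^*$-algebra, say $C(G)_{central}=C(X)$, and for each $Q$ the map $\pi_Q$ carries $C(X)$ onto a commutative $C^*$-subalgebra of $C^*(\Gamma_Q)$, necessarily of the form $C(Z_Q)$ with $Z_Q\subset X$ a closed subset --- one should think of $X$ as the ``space of conjugacy classes'' of $G$, and of $Z_Q$ as the image in it of the conjugacy classes of $\widehat{\Gamma}_Q$. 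Since $\pi_Q(P)=0$ precisely when $P$ vanishes on $Z_Q$, the conjecture is equivalent to the single assertion
$$\overline{\bigcup_{Q\in U_N}Z_Q}=X.$$

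I would next confirm that the connectedness hypothesis is unavoidable, and see what it is doing. In the classical case $\widehat{\Gamma}_Q=G\cap(Q^*\mathbb T^NQ)$ by Proposition 2.1, and since every unitary is diagonalizable, every element of $G$ lies in some $Q^*\mathbb T^NQ$; thus $\bigcup_QZ_Q=X$ outright, with no connectedness needed. The quantum obstruction is that a corepresentation can be ``simultaneously diagonalized'' only when it factors through a group dual quotient, so the family $\{\widehat{\Gamma}_Q\}$ only detects the group-dual part of $C(G)$. A nontrivial finite quantum quotient $G\to F$ --- compare the Kac--Paljutkin example in Proposition 2.7 --- embeds $C(F)_{central}$ into $C(G)_{central}$ and, through the genuinely quantum irreducibles of $F$, produces central elements that one does not expect to be seen by any $\pi_Q$. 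Excluding this is exactly the purpose of the hypothesis ``$G$ connected''.

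For the general case I would pass to $J:=\bigcap_{Q\in U_N}\ker\pi_Q$. Each $\pi_Q$ is a surjective morphism of compact quantum groups, so each $\ker\pi_Q$, and hence $J$, is a Woronowicz--Hopf ideal; thus $C(G)/J=C(G')$, where $G'\subset G$ is the closed quantum subgroup generated by all the $\widehat{\Gamma}_Q$. The conjecture, in its central form, is precisely $J\cap C(G)_{central}=0$, and it would follow from the stronger statement $J=0$, i.e. that the maximal tori generate $G$. This is where Tannakian duality enters, as in Sections 5--6: by Theorem 1.5 every group dual subgroup of $G$ lies inside some $\widehat{\Gamma}_Q$, so the Tannakian category of $G'$ is the intersection $\bigcap_Q\mathcal C_{\widehat{\Gamma}_Q}$ of the categories of all maximal tori, and what must be shown is that, for $G$ connected, this intersection still carries the irreducible characters of $G$ with their correct multiplicities --- equivalently, that the matrices recording the decompositions of the $\chi_r$ over the various $\widehat{\Gamma}_Q$ jointly separate the $\chi_r$. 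The conclusion would then be transported back through the reformulation of the first paragraph.

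The crux, and the step I expect to be genuinely hard, is this last passage from ``agrees with every maximal torus'' to ``agrees with $G$'' --- i.e. proving that a connected compact quantum group is generated by its maximal tori. Classically this is a theorem about compact connected groups, but in the quantum setting there are no points to be ``generated by'', so the argument must be Tannakian or run through the Haar state; and Proposition 2.4, which shows that the fibers $\Gamma_Q$ are generically trivial, warns that no naive averaging over $Q$ can do the job. I would expect that carrying this out requires leaning on the explicit descriptions of $\{\Gamma_Q\}$ already in hand --- Propositions 2.2 and 2.3, and Theorem 2.6 for $S_N^+$ --- fed through the easy quantum group and Tannakian machinery, and that isolating a model-independent reason why connectedness forces the family $\{\widehat{\Gamma}_Q\}$ to be jointly faithful on $C(G)_{central}$ is where the real work lies.
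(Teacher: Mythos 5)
This statement is labeled a \emph{conjecture} in the paper, and the paper offers no proof of it: it only records that it holds trivially for group duals (take $Q$ to be the spinning matrix of Theorem 1.5, so that $\pi_Q=id$) and for classical groups (conjugate a maximal torus into $\mathbb T^N$), and then verifies it case by case for $O_N^+,U_N^+,S_N^+,H_N^+$ and for half-classical quantum groups. Your text is likewise not a proof but a programme, and you say so yourself: the decisive step --- that for $G$ connected the family $\{\widehat{\Gamma}_Q\}$ is jointly faithful on $C(G)_{central}$, or the stronger statement $J=\bigcap_Q\ker\pi_Q=0$, i.e.\ that a connected compact quantum group is generated by its maximal tori --- is left entirely open. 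Note moreover that reducing to $J=0$ replaces the conjecture by a strictly stronger one (a ``generation conjecture''), so even if the rest of your reductions were airtight you would have made the problem harder, not solved it.

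There is also a concrete error in your second reduction: $C(G)_{central}$ is \emph{not} a commutative $C^*$-algebra in general, so the Gelfand picture $C(G)_{central}=C(X)$ and the reformulation $\overline{\bigcup_QZ_Q}=X$ do not make sense. The central algebra is spanned by the irreducible characters with multiplication governed by the fusion rules, and these need not be commutative: for $G=\widehat{\Gamma}$ with $\Gamma$ non-abelian one has $C(G)_{central}=C^*(\Gamma)$ itself, and for $G=U_N^+$ the fusion semiring is a free monoid on two generators, so $C(U_N^+)_{central}$, the $*$-algebra generated by the circular variable $\chi=\sum_iu_{ii}$, is non-commutative --- and both examples are connected, so connectedness does not rescue the claim. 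Your opening observation (the counit factors through every $\pi_Q$, disposing of the case $\varepsilon(P)\neq0$) and the passage to the Hopf ideal $J$ are correct and potentially useful, and your diagnosis of where connectedness enters is reasonable; but as it stands the argument neither matches anything in the paper (which has nothing to match) nor closes the conjecture.
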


Observe that this conjecture holds trivially when $G=\widehat{\Gamma}$ is a group dual, because here we have $\Gamma=\Gamma_Q$, where $Q\in U_N$ to be the spinning matrix which produces the embedding $ \widehat{\Gamma}\subset U_N^+$, coming from Theorem 1.5 above, and we have $\pi_Q=id$ in this case.

The conjecture holds as well in the classical group case, because we can take here $Q\in U_N$ to be such that $QTQ^*\subset\mathbb T^N$, where $T\subset U_N$ is a maximal torus for $G$.

Observe that in both the above cases, we have in fact a matrix $Q\in U_N$ such that $\pi_Q$ is faithful on $C(G)_{central}$. In addition, the connectedness assumption is not really needed in the group dual case, nor for most of the known examples of compact groups. Thus, there are several potential ways of formulating some stronger conjectures.

At the analytic level now, our first, and main conjecture, will concern amenability. Let us recall from \cite{wo1} that associated to any compact quantum group $G$ are in fact several Hopf $C^*$-algebras, including a maximal one $C_{max}(G)$, and a minimal one $C_{min}(G)$. The compact quantum group $G$ is said to be coamenable is the canonical quotient map $C_{max}(G)\to C_{min}(G)$ is an isomorphism. Equivalently, the discrete quantum group $\Gamma=\widehat{G}$ is called amenable when the canonical quotient map $C^*_{max}(G)\to C^*_{min}(G)$ is an isomorphism. With this convention, our conjecture is as follows:

\begin{conjecture}[Amenability]
$G\subset U_N^+$ is coamenable if and only if each of the compact quantum groups $\Gamma_Q$ is coamenable.
\end{conjecture}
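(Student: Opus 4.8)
We outline a possible line of attack, based on the Kesten-type amenability criterion. Throughout, set $w=u\oplus\bar u$, a self-conjugate corepresentation of $G$ of dimension $2N$; since the paper works under the axiom $S^2=\mathrm{id}$, the Kesten criterion applies and tells us that $\widehat G$ is coamenable precisely when
$$\lim_{n\to\infty}\big(\dim\mathrm{Hom}_G(1,w^{\otimes 2n})\big)^{1/2n}=2N,$$
the point being that $\|\chi_w\|_{C_{max}(G)}=2N$ always, while $\|\chi_w\|_{C_{min}(G)}^{2n}=h(\chi_w^{2n})=\dim\mathrm{Hom}_G(1,w^{\otimes 2n})$. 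For the ``only if'' direction this is now immediate: each $\widehat\Gamma_Q\subset G$ is a closed quantum subgroup via $\pi_Q$, and restricting corepresentations only enlarges intertwiner spaces, so
$$\dim\mathrm{Hom}_G(1,w^{\otimes 2n})\ \leq\ \dim\mathrm{Hom}_{\widehat\Gamma_Q}(1,w^{\otimes 2n})\ \leq\ (2N)^{2n};$$
hence the limit $2N$ is forced for $\widehat\Gamma_Q$, and since a short computation gives $w|_{\widehat\Gamma_Q}\cong\mathrm{diag}(g_1,\dots,g_N,g_1^{-1},\dots,g_N^{-1})$, the middle quantity is the number of length-$2n$ loops at the origin in the Cayley graph of $\Gamma_Q$ for the symmetric generating multiset $\{g_i^{\pm1}\}$, so by the classical Kesten theorem this is exactly the amenability of $\Gamma_Q$. (Alternatively one may simply invoke that coamenability passes to closed quantum subgroups.)

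For the ``if'' direction the plan is to run the criterion backwards, and the first ingredient is a Tannakian ``generation'' statement: that the maximal tori jointly recover $G$, in the strong form $\bigcap_{Q\in U_N}\ker\pi_Q=0$ inside $C(G)$. In fact, for the present application one only needs this on $C(G)_{central}$, since $\dim\mathrm{Hom}_G(1,w^{\otimes 2n})=h(\chi_w^{2n})$ with $\chi_w^{2n}\in C(G)_{central}$; so the required input is essentially the Character conjecture (Conjecture 3.1), but used jointly over $Q$ rather than pointwise. Granting it, functoriality of Tannakian duality yields
$$\mathrm{Hom}_G(1,w^{\otimes 2n})=\bigcap_{Q\in U_N}\mathrm{Hom}_{\widehat\Gamma_Q}(1,w^{\otimes 2n}),$$
and the hypothesis, via Kesten once more, tells us that each subspace on the right has codimension $\varepsilon_{n,Q}(2N)^{2n}$ inside $\mathbb C^{(2N)^{2n}}$ with $\varepsilon_{n,Q}^{1/2n}\to0$.

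The main obstacle, and the reason this remains a conjecture, is the passage from ``every $\mathrm{Hom}_{\widehat\Gamma_Q}(1,w^{\otimes 2n})$ is large'' to ``the intersection over all $Q$ is still large on the exponential scale''. The naive union bound on codimensions is hopeless here, since a single fibre such as $\Gamma_Q=\mathbb Z$ already contributes codimension of order $(2N)^{2n}$. What seems to be needed is a genuine finiteness input — that for each degree $2n$ only finitely many subspaces $\mathrm{Hom}_{\widehat\Gamma_Q}(1,w^{\otimes 2n})$ actually occur as $Q$ runs over $U_N$ (triviality in $\Gamma_Q$ of a fixed word of length $2n$ being a condition on $Q$ of bounded algebraic complexity, as it visibly is in the explicit cases of Propositions 2.2, 2.3 and Theorem 2.8), with the number of these growing subexponentially in $n$ — combined with a weighting argument replacing the crude union bound, or, alternatively, a direct construction of a F{\o}lner sequence for $\widehat G$ out of F{\o}lner sequences for the various $\Gamma_Q$. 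We expect that a proof of the Character conjecture, together with such a quantitative refinement, would establish the amenability conjecture; the analytic heart lies entirely in this last, quantitative, step.
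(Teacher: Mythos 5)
The statement you are addressing is a conjecture: the paper does not prove it, and only records that one implication is trivial, plus verifications in families of examples (Proposition 4.1, Theorem 4.4, Theorem 5.5). Your ``only if'' direction is correct and coincides with the paper's one-line remark that $\implies$ follows from the quotient map $C(G)\to C^*(\Gamma_Q)$, i.e.\ from the fact that amenability passes to quotients of discrete quantum groups; your Kesten/loop-counting computation is a correct, more explicit rendering of the same fact. (One small slip: the identity $\|\chi_w\|_{C_{min}(G)}^{2n}=h(\chi_w^{2n})$ is false for fixed $n$; what is true is $\lim_n h(\chi_w^{2n})^{1/2n}=\|\chi_w\|_{C_{min}(G)}$, which is all your squeeze argument needs.)

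The ``if'' direction is where the conjecture actually lives, and your outline does not close it --- as you say yourself. There are two genuine gaps. First, the intersection formula $\mathrm{Hom}_G(1,w^{\otimes 2n})=\bigcap_{Q}\mathrm{Hom}_{\widehat\Gamma_Q}(1,w^{\otimes 2n})$ is itself an unproven ``the tori generate $G$'' statement; it does not follow from Conjecture 3.1, which only asserts that each nonzero central element survives in \emph{some} $\pi_Q$ (and which moreover requires connectedness, an assumption absent from the amenability conjecture). Second, and more seriously, even granting that formula there is no mechanism for passing from ``each $\mathrm{Hom}_{\widehat\Gamma_Q}(1,w^{\otimes 2n})$ has subexponentially small defect'' to ``their intersection does'': as you note, Proposition 2.4 shows that generic fibres are quotients of $\mathbb Z$, so individual codimensions are already of order $(2N)^{2n}$ and any union bound is off by an exponential factor. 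Your proposed remedies (finiteness of the set of occurring subspaces in each degree, or a F{\o}lner assembly) are stated as hopes, not arguments. So the proposal proves the easy half and correctly locates the obstruction in the hard half, but does not overcome it --- which is consistent with the statement being open.

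One structural remark: the paper's working formulation of the hard direction is the contrapositive --- if $G$ is not coamenable, exhibit a single $Q$ with $\Gamma_Q$ non-amenable --- and every case verified in the paper proceeds exactly this way (e.g.\ $\Gamma_1=\mathbb Z_2^{*N}$ for $O_N^+,H_N^+$, $\Gamma_1=F_N$ for $U_N^+$, Bichon's $\widehat{\mathbb Z_2*\mathbb Z_3}$ inside $S_N^+$). Hunting for one bad fibre is likely a more tractable strategy than controlling the intersection over all good ones, and you may want to reorganize the ``if'' direction around it.
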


Observe that $\implies$ is trivial, because of the quotient map $C(G)\to C^*(\Gamma_Q)$, which can be interpreted as coming from a discrete quantum group quotient map $\widehat{G}\to\Gamma_Q$.

Regarding now $\Longleftarrow$, an equivalent statement here, a bit more convenient, is that if $G$ is not coamenable, then there exists $Q\in U_N$ such that $\Gamma_Q$ is not amenable.

Observe that this latter statement holds trivially in the group dual case, $G=\widehat{\Gamma}$, because we can take here $Q\in U_N$ to be the spinning matrix coming from Theorem 1.5, for which $\pi_Q=id$. The statement holds as well in the classical case, $G\subset U_N$, due to the trivial fact that these latter quantum groups are all coamenable. See \cite{wo1}.

As already mentioned, the above conjecture is our main analytic one. We believe that the above statement is just the ``tip of the iceberg'', with many other conjectures being behind it, some of them regarding the fine analytic structure of $C(G)$, and some other, regarding the fine probabilistic structure of the Kesten measure of $\widehat{G}$.

Regarding now the growth, let us recall from \cite{bv1} that this is constructed by using the balls in $Irr(G)$, with respect to the distance coming from the fundamental corepresentation $u$, and with each corepresentation $r$ contributing with a $\dim(r)^2$ factor.

With this convention, we have the following conjecture:

\begin{conjecture}[Growth]
Assuming $G\subset U_N^+$, the discrete quantum group $\widehat{G}$ has polynomial growth if and only if each $\Gamma_Q$ has polynomial growth.
\end{conjecture}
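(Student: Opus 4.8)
The plan is to treat the two implications separately: the direct one is elementary, while the converse one should be reducible to the Characters and Amenability conjectures above, modulo one genuinely missing ingredient.

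For ``$\widehat{G}$ of polynomial growth $\implies$ each $\Gamma_Q$ of polynomial growth'', I would view the surjection $\pi_Q:C(G)\to C^*(\Gamma_Q)$ as a quotient map of discrete quantum groups $\widehat{G}\to\Gamma_Q$, and check that the growth function can only decrease. Write $b_G(k)=\sum_{r\in B_k}\dim(r)^2$, where $B_k\subset Irr(G)$ is the ball of radius $k$ for the length associated to $u$. Since $\pi_Q(u_{ij})=(Q^*vQ)_{ij}$ with $v=diag(g_1,\ldots,g_N)$, the matrix coefficients of any $r\subset u^{\otimes\varepsilon_1}\otimes\cdots\otimes u^{\otimes\varepsilon_m}$ with $m\leq k$ are mapped into the linear span of the words of length $\leq k$ in the $g_i^{\pm1}$; conversely, by padding with trivial corepresentations, every such word occurs inside $\pi_Q(r)$ for some $r\in B_k$. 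As $\pi_Q(r)$ is $\dim(r)$-dimensional it involves at most $\dim(r)$ group elements, so the ball of radius $k$ in $\Gamma_Q$ has at most $\sum_{r\in B_k}\dim(r)\leq\sum_{r\in B_k}\dim(r)^2=b_G(k)$ elements. Thus $b_{\Gamma_Q}(k)\leq b_G(k)$ for all $k$, and polynomial growth passes to the fibers.

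For the converse I would argue in contrapositive form: assuming $\widehat{G}$ not of polynomial growth, produce $Q\in U_N$ with $\Gamma_Q$ not of polynomial growth, splitting according to the source of the growth failure. \emph{If the combinatorial growth fails}, i.e. $|B_k|$ is super-polynomial, I would invoke the Characters conjecture in its strong form: there is $Q_0\in U_N$ with $\pi_{Q_0}$ faithful on $C(G)_{central}$ (automatic for group duals, where $\pi_{Q_0}=id$ for a suitable $Q_0$ from Theorem 1.5). Then the characters $\{\chi_r\mid r\in B_k\}$, being linearly independent, have linearly independent images, which by the above computation lie in the span of the words of length $\leq k$ in $\Gamma_{Q_0}$, a space of dimension $b_{\Gamma_{Q_0}}(k)$; hence $|B_k|\leq b_{\Gamma_{Q_0}}(k)$, and $\Gamma_{Q_0}$ is not of polynomial growth. \emph{If instead $|B_k|$ is polynomial} but the dimensions $\dim(r)$ grow super-polynomially along the length filtration --- the $O_N^+$ phenomenon with $N\geq3$, where $\Gamma_1=\mathbb Z_2^{*N}$ realizes the growth combinatorially even though all group elements are of dimension $1$ --- I would first dispose of the case where $\widehat{G}$ is not amenable: there the Amenability conjecture produces a non-amenable $\Gamma_Q$, necessarily of exponential growth, and we are done. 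There remains the case where $\widehat{G}$ is amenable yet has super-polynomial dimension growth.

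This last case is the main obstacle. The faithfulness provided by the Characters conjecture only controls the commutative algebra $C(G)_{central}$, hence the combinatorics of the fusion rules, and gives no bound on $\dim(r)$ --- indeed $C^*(\Gamma_{Q_0})$ is merely a quotient of the group algebra, carrying no useful norm estimate --- while the Amenability conjecture covers only the non-amenable regime. What is missing is a ``quantum Gromov'' type input: that an amenable $\widehat{G}$ with only polynomially many irreducibles must already have polynomially bounded dimensions, a noncommutative analogue of the Weyl dimension estimate, after which the converse would follow. I would expect the tools for this to come from the Tannakian analysis of Sections 5 and 6, refining the length filtration on $Irr(G)$ into a combinatorial model from which a suitable $Q$ can be read off; short of a general argument, the conjecture should at least be verifiable directly on all the example classes above, where $b_G$ and the fiberwise $b_{\Gamma_Q}$ can be compared by hand.
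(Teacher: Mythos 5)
The statement you are asked to prove is one of the paper's three \emph{conjectures}: the paper offers no proof of it, only verifications in special cases (classical groups and group duals in Proposition 4.1, the free quantum groups $O_N^+,U_N^+,S_N^+,H_N^+$ in Theorem 4.4, half-classical quantum groups in Theorem 5.5). So there is no ``paper proof'' to match yours against, and any complete proof you produced would be a new result. Your forward implication is fine and is essentially the standard observation that growth does not increase under a quotient $\widehat{G}\to\Gamma_Q$ of discrete quantum groups: every element of the ball of radius $k$ in $\Gamma_Q$ occurs in $\pi_Q(r)$ for some $r\in B_k$, and $\pi_Q(r)$ involves at most $\dim(r)$ group elements, giving $b_{\Gamma_Q}(k)\leq\sum_{r\in B_k}\dim(r)\leq b_G(k)$.

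The converse is where the genuine gap sits, and you have located it honestly but not closed it. Your reduction is doubly conditional: the first case invokes not the Characters conjecture as stated (which, for connected $G$, only provides a $Q$ depending on the central element $P$) but the stronger, also open, statement that a single $Q_0$ makes $\pi_{Q_0}$ faithful on all of $C(G)_{central}$; the second case invokes the Amenability conjecture, likewise open. Even granting both, the terminal case --- $\widehat{G}$ amenable, $|B_k|$ polynomially bounded, but $\max_{r\in B_k}\dim(r)$ super-polynomial --- is left entirely unresolved, and it is not a vacuous case: nothing in the maximal torus data obviously sees the dimensions $\dim(r)$, since every irreducible of every $\Gamma_Q$ is one-dimensional. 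Your proposed missing lemma (amenable plus polynomially many irreducibles implies polynomially bounded dimensions) is itself an unproved assertion of roughly the same depth as the conjecture. So what you have is a correct easy implication plus a conditional reduction with an admitted hole, not a proof; as a contribution it is a reasonable strategy sketch, consistent with the paper's own view that the conjecture is nontrivial and supported only by the example classes where $b_G$ and the $b_{\Gamma_Q}$ can be computed directly.
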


As before, the conjecture is trivial in the group dual case. In the classical group case the conjecture holds as well, but this is not trivial, coming from  \cite{bv1} in the connected simply connected case, and from the recent paper \cite{dpr} in the general case.

Once again, we believe that this conjecture is just the ``tip of the iceberg''. Here is as series of more specialized statements, regarding the cardinality $|\,.\,|$, the polynomial growth exponents $p(.)$, and the exponential growth exponents $e(.)$:
\begin{eqnarray*}
\log\log|G|&\simeq&\sup_{Q\in U_N}\log\log|\Gamma_Q|\\
p(\widehat{G})&\approx&\sup_{Q\in U_N}p(\Gamma_Q)\\
\log e(\widehat{G})&\approx&\sup_{Q\in U_N}\log e(\Gamma_Q)
\end{eqnarray*}

These statements are all trivial in the group dual case, with equality everywhere. In the classical group case, the first estimate is what seems to come out from the existent literature, and the second estimate is non-trivial, but holds by \cite{dpr}. Finally, regarding the last estimate, this is supported by the various computations in \cite{bv1}.

\section{General results}

We present here some general results, supporting the conjectures made in section 3. As a first statement, collecting the various observations made above, we have:

\begin{proposition}
The $3$ conjectures hold for the classical groups, and for the group duals.
\end{proposition}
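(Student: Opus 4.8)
\section*{Proof proposal}

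The plan is to treat the group dual case and the classical case separately, and within each to dispatch the three conjectures one at a time, reusing the observations already recorded in Sections 2 and 3.

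In the group dual case $G=\widehat{\Gamma}$, the whole point is Theorem 1.5: the given embedding $\widehat{\Gamma}\subset U_N^+$ is produced by some spinning matrix $Q_0\in U_N$, for which $\pi_{Q_0}=\mathrm{id}$, so that $\Gamma_{Q_0}=\Gamma=\widehat{G}$. The Character conjecture is then immediate, since $\pi_{Q_0}$ is faithful, hence works for every nonzero central $P$ (connectedness plays no role here). For the Amenability conjecture, the direction $\Rightarrow$ is the quotient map remark from Section 3, and $\Leftarrow$ follows by taking $Q=Q_0$, since coamenability of $\Gamma_{Q_0}=\widehat{G}$ is exactly coamenability of $G$. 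For the Growth conjecture one uses that $\mathrm{Irr}(\widehat{\Gamma})=\Gamma$ with all dimensions equal to $1$, so that the growth of $\widehat{G}$ is the word-metric growth of $\Gamma$; since every $\Gamma_Q$ is a quotient of $\Gamma=\Gamma_{Q_0}$ by Proposition 2.3 (and the remark following it, for non-diagonal embeddings), and polynomial growth passes to quotient groups, the two sides of the equivalence coincide.

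In the classical case $G\subset U_N$, I would invoke Proposition 2.1, which gives $\widehat{\Gamma}_Q=G\cap(Q^*\mathbb{T}^NQ)$. For the Character conjecture, pick a maximal torus $T\subset G$ and a rotation $Q\in U_N$ with $QTQ^*\subset\mathbb{T}^N$, so that $T\subset\widehat{\Gamma}_Q\subset G$; then $\pi_Q$ followed by restriction to $T$ is simply the restriction of functions from $G$ to $T$, which is injective on $C(G)_{central}$ because, by the maximal torus theorem, every element of the connected group $G$ is conjugate into $T$ --- this is where the connectedness hypothesis enters. For the Amenability conjecture, a compact group is always coamenable, so the left-hand side is automatic; and each $\widehat{\Gamma}_Q=G\cap(Q^*\mathbb{T}^NQ)$ is a compact abelian Lie group, so $\Gamma_Q$ is a finitely generated abelian group, hence amenable, and the right-hand side is automatic too. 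The same remark shows that for the Growth conjecture each $\Gamma_Q$ is finitely generated abelian, hence of polynomial growth, so again the right-hand side is automatic, and the equivalence reduces to the statement that $\widehat{G}$ has polynomial growth for every compact Lie group $G\subset U_N$.

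The routine facts I would use without proof are standard: closed subgroups of $U_N$ are compact Lie groups, a torus in $U_N$ is conjugate into $\mathbb{T}^N$, quotients of polynomial growth discrete groups have polynomial growth, and the growth of $\widehat{\widehat{\Gamma}}$ matches the word growth of $\Gamma$. The only non-elementary input is the classical Growth conjecture, which is precisely \cite{bv1} in the connected simply connected case and \cite{dpr} in general; this is the main obstacle, in the sense that the proposition is otherwise a pure assembly of the observations in Sections 2--3 together with that one external result.
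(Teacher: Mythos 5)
Your proof is correct and follows essentially the same route as the paper, which simply assembles the observations already made in Section 3: the spinning matrix $Q_0$ with $\pi_{Q_0}=\mathrm{id}$ for group duals, the conjugation of a maximal torus into $\mathbb{T}^N$ for the classical character conjecture, the automatic coamenability of classical groups and amenability of the abelian $\Gamma_Q$, and the external input of \cite{bv1}, \cite{dpr} for classical growth. Your version merely spells out details the paper leaves implicit.
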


\begin{proof}
This follows as explained in the previous section, the summary being;

(1) Characters: trivial for group duals, holds as well for classical groups.

(2) Amenability: trivial for both group duals, and for classical groups.

(3) Growth: trivial for group duals, nontrivial cf. \cite{dpr} for classical groups.
\end{proof}

By getting back now to the precise justifications in section 3, observe that for the group duals, the argument was basically always the same, namely that we can take $Q\in U_N$ to be the spinning matrix coming from Theorem 1.5, for which $\pi_Q=id$. 

Our aim now is to analyse and further extend this phenomenon. The definition that we will need, capturing the fact that ``only one $Q$ is needed'', is as follows:

\begin{definition}
A compact quantum group $G\subset U_N^+$ is called ``tame'' when there exists $L\in U_N$ such that we have a quotient map $\Gamma_L\to\Gamma_Q$, for any $Q\in U_N$.
\end{definition}

Observe that, by changing the fundamental corepresentation, we can always assume that our tame quantum groups are ``normalized'', with $L=1$.

At the level of examples, any group dual is tame. Also, any compact connected Lie group is tame. As a basic counterexample, $S_N$ is not tame, and nor is $S_N^+$.

In the tame case, our conjectures have ``lighter'' formulations, as follows:

\begin{proposition}
When $G$ is tame and normalized, the conjectures are as follows:
\begin{enumerate}
\item Characters: $\pi_1:C(G)\to C^*(\Gamma_1)$ is injective on $C(G)_{central}$.

\item Amenability: $G$ is coamenable if and only if $\Gamma_1$ is amenable.

\item Growth: $\widehat{G}$ has polynomial growth if and only if $\Gamma_1$ has polynomial growth.
\end{enumerate}
\end{proposition}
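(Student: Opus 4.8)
The plan is to unpack Definition 4.2 and substitute it into each of the three conjectures. After the normalization $L=1$, tameness says precisely that for every $Q\in U_N$ the group $\Gamma_Q$ is a quotient of $\Gamma_1$, compatibly with the canonical surjections from $C(G)$; in $C^*$-algebra terms, there is a surjective morphism $\rho_Q:C^*(\Gamma_1)\to C^*(\Gamma_Q)$ with $\rho_Q\circ\pi_1=\pi_Q$, and $\rho_1=id$. Since $Q=1$ is itself among the admissible matrices, each of the three conjectures for $G$ trivially implies its ``$\Gamma_1$-only'' version, so in each case only the converse carries content, and this converse amounts to saying that passing from $\Gamma_1$ to an arbitrary $\Gamma_Q$ cannot create the relevant bad behaviour.

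For (1), where $G$ is moreover connected as in Conjecture 3.1: if $\pi_1$ is injective on $C(G)_{central}$ then, given $0\neq P\in C(G)_{central}$, the choice $Q=1$ already satisfies $\pi_Q(P)\neq0$, so Conjecture 3.1 holds for $G$. Conversely, if Conjecture 3.1 holds for the connected $G$ and $0\neq P\in C(G)_{central}$, pick $Q$ with $\pi_Q(P)\neq0$; from $\pi_Q=\rho_Q\circ\pi_1$ we get $\pi_1(P)\neq0$, hence $\pi_1$ is injective on $C(G)_{central}$. So the two formulations are equivalent.

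For (2) and (3): here Conjectures 3.2 and 3.3 have the shape ``$G$ has property $X$ $\iff$ every $\Gamma_Q$ has property $Y$'', with $Y$ being amenability, resp. polynomial growth. I would first record that both of these properties pass to group quotients: for amenability this is classical, and for polynomial growth of a finitely generated group it follows from Gromov's theorem, a quotient of a virtually nilpotent group being again virtually nilpotent. Using tameness, $\Gamma_1$ amenable (resp. of polynomial growth) then forces every $\Gamma_Q$ to be amenable (resp. of polynomial growth); conversely $\Gamma_1$ is one of the $\Gamma_Q$. Hence ``every $\Gamma_Q$ is amenable'' $\iff$ ``$\Gamma_1$ is amenable'', and similarly for polynomial growth, and substituting these equivalences into the right-hand sides of Conjectures 3.2 and 3.3 yields (2) and (3).

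I do not expect a genuine obstacle: once Definition 4.2 is unwound, the proposition is essentially a bookkeeping statement, and the only point needing a little care is the $C^*$-level reading of tameness used in (1) — namely that the datum ``quotient map $\Gamma_L\to\Gamma_Q$'' is to be understood as a factorization $\pi_Q=\rho_Q\circ\pi_1$ through the possibly non-full group algebras $C^*(\Gamma_Q)$, which is exactly what Definition 4.2 encodes and what the reduction to $L=1$ preserves. The auxiliary inputs, quotient-stability of amenability and of polynomial growth, are standard.
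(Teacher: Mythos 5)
Your proposal is correct and follows essentially the same route as the paper, whose entire proof is the one-line observation that everything follows from the quotient maps $\Gamma_1\to\Gamma_Q$ provided by tameness; you have merely spelled out the bookkeeping (the factorization $\pi_Q=\rho_Q\circ\pi_1$ for the character statement, and the stability of amenability and polynomial growth under group quotients for the other two). The only point you add beyond the paper is the explicit caveat that the quotient maps in Definition 4.2 must be read as compatible with the surjections from $C(G)$, which is indeed the intended reading.
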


\begin{proof}
This follows indeed from definitions, by using the quotient maps $\Gamma_1\to\Gamma_Q$ coming from the tameness assumption.
\end{proof}

It would be of course interesting to know more about the tame quantum groups. The problem here is that it is very unclear where the maps $\Gamma_L\to\Gamma_Q$ should come from.

Let us discuss now the verification for Wang's free quantum groups $O_N^+,U_N^+,S_N^+$. We include in our study the hyperoctahedral quantum group $H_N^+$, which, in view of the general theory in \cite{bsp}, is a fundamental example of a free quantum group as well. 

We have the following result, regarding these quantum groups:

\begin{theorem}
The $3$ conjectures hold for $G=O_N^+,U_N^+,S_N^+,H_N^+$.
\end{theorem}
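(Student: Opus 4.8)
The plan is to verify the three conjectures for each of the four quantum groups $O_N^+, U_N^+, S_N^+, H_N^+$ separately, exploiting the fact that for these examples the family $\{\Gamma_Q\}$ consists of (quotients of) very explicit free products, which are well understood both algebraically and analytically. The cases split naturally according to whether $G$ is coamenable: for $O_N^+, U_N^+, S_N^+$ with $N$ large these are non-coamenable, while for small $N$ (e.g.\ $O_2^+, S_N^+$ with $N\le 3$, $H_2^+$) they are, and the conjectures must be checked in both regimes.

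First I would handle the amenability and growth conjectures, which go together. For $U_N^+$, Proposition 2.2(1) gives $\Gamma_Q = F_N$ for every $Q$, so the ``$\Longleftarrow$'' direction amounts to showing: $U_N^+$ coamenable (resp.\ polynomial growth) $\iff$ $F_N$ amenable (resp.\ polynomial growth). Since $\widehat{U_N^+}$ is non-amenable and of exponential growth for $N\ge2$, and $F_N$ is non-amenable and of exponential growth for $N\ge2$, both sides fail simultaneously (with $N=1$ trivial), so the equivalence holds. For $O_N^+$, Proposition 2.2(2) realizes $\Gamma_Q$ as a quotient of $F_N$ by relations $g_ig_j=1$; by choosing $Q$ appropriately (e.g.\ $Q=1$, giving $\mathbb Z_2^{*N}$) one produces a non-amenable $\Gamma_Q$ of exponential growth exactly when $N\ge3$ (with $N=2$ giving the coamenable $SU_2^{-1}$, where indeed all $\Gamma_Q$ are virtually abelian), matching the known coamenability/growth dichotomy for $O_N^+$. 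For $S_N^+$ one uses Theorem 2.8(2): taking the Fourier matrix $Q=F_N$ yields $\Gamma_Q=\mathbb Z_N$ for $N\le 3$ but, crucially, for $N\ge 4$ a decomposition such as $N=2+2+\cdots$ or $N=4$ produces free products like $\mathbb Z_2*\mathbb Z_2 = D_\infty$ — still amenable — so one must instead take $Q=F_N$ with $N\ge4$, or $Q=\mathrm{diag}(F_2,F_2)$... here one needs $\mathbb Z_{N_1}*\cdots*\mathbb Z_{N_k}$ to be non-amenable, which happens as soon as $k\ge2$ and $(N_1,N_2)\ne(2,2)$, e.g.\ $N=5=2+3$ giving $\mathbb Z_2*\mathbb Z_3=PSL_2(\mathbb Z)$, and more generally one checks $N\ge4$ always admits such a decomposition. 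For $H_N^+$ one invokes the presentation of $H_N^+$ from \cite{bsp} and the classification of its group dual subgroups, reducing $\Gamma_Q$ to quotients of $\mathbb Z_2^{*N}\rtimes$-type or free products of $\mathbb Z_2$'s, again non-amenable/exponential growth precisely when $N$ is large.

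Next I would treat the character conjecture. Here I would use the tameness notion just introduced: I claim $O_N^+$ and $U_N^+$ are tame (normalized at $L=1$), since for $U_N^+$ every $\Gamma_Q=F_N$ receives the identity map from $\Gamma_1=F_N$, and for $O_N^+$ every $\Gamma_Q$ is a quotient of $\Gamma_1=\mathbb Z_2^{*N}$ — one checks the relations $R_{ij}\ne0\implies g_ig_j=1$ are all consequences of $g_i^2=1$. Then by Proposition 4.5(1) the character conjecture for $O_N^+,U_N^+$ reduces to showing $\pi_1:C(G)\to C^*(\Gamma_1)$ is injective on $C(G)_{central}$, which follows from the fact that the characters $\chi_r$ of $O_N^+$ (resp.\ $U_N^+$) are mapped to the corresponding characters of $\mathbb Z_2^{*N}$ (resp.\ $F_N$) and these remain linearly independent — concretely, $\chi_u$ of $O_N^+$ maps to $g_1+\cdots+g_N$ in $C^*(\mathbb Z_2^{*N})$ and the semicircle/free-Poisson moment structure is preserved. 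For $S_N^+$ and $H_N^+$, which are not tame, I would instead argue directly: given a nonzero central element $P$, expand it in the character basis and use the explicit description of $\Gamma_Q$ (Theorem 2.8 for $S_N^+$) to locate a $Q$ — typically a block-Fourier matrix — for which the image of $P$ is nonzero, using that the fusion semiring of $S_N^+$ (Temperley–Lieb at the relevant parameter) is faithfully detected by the free products $\mathbb Z_{N_1}*\cdots*\mathbb Z_{N_k}$ as $N_i$ and $k$ grow.

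The main obstacle I expect is the character conjecture for $S_N^+$ and $H_N^+$: unlike amenability and growth, where one only needs to exhibit a \emph{single} bad $\Gamma_Q$, the character statement requires controlling \emph{which} central elements survive in \emph{which} quotient, simultaneously across the whole character algebra, and for the non-tame examples no single $Q$ suffices. The technical heart will be a Tannakian/diagrammatic argument showing that the intertwiner spaces of $G$ are recovered from those of the group duals $\widehat{\Gamma}_Q$ as $Q$ ranges over Fourier-type matrices — essentially that the categorical partition/noncrossing-partition structure of $S_N^+$ and $H_N^+$ is ``generated'' by the cyclic-group diagrams — and this is precisely the kind of Tannakian input the paper defers to sections 5–6. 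A secondary, more bookkeeping-level obstacle is making sure the small-$N$ coamenable cases ($N\le3$ for $S_N^+$, $N=2$ for $O_N^+,H_N^+$) are handled correctly on the ``$\Longleftarrow$'' side, i.e.\ that \emph{all} $\Gamma_Q$ really are amenable/polynomial there, which requires the full classification of group dual subgroups in those low-rank cases rather than just producing one example.
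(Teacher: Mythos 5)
Your amenability argument follows the paper's route (exhibit a non-amenable $\Gamma_Q$: $\Gamma_1=\mathbb Z_2^{*N}$ for $O_N^+,H_N^+$, $\Gamma_1=F_N$ for $U_N^+$, and a $\mathbb Z_2*\mathbb Z_3$ fibre for $S_N^+$), but it contains a factual error: $S_4^+=SO_3^{-1}$ \emph{is} coamenable, so the non-coamenable range is $N\geq5$, not $N\geq4$. Your claim that ``$N\geq4$ always admits such a decomposition'' fails precisely at $N=4$, where every $\mathbb Z_{N_1}*\ldots*\mathbb Z_{N_k}$ with $\sum N_i=4$ is amenable ($\mathbb Z_4$, $\mathbb Z_3$, $D_\infty$, $\mathbb Z_2$, $\{1\}$) --- which is exactly what consistency with coamenability of $S_4^+$ requires, and what the ``only if'' direction needs there. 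A second error: $O_N^+$ is not tame at $L=1$ by the reason you give; the relation $g_ig_j=1$ ($i\neq j$) is \emph{not} a consequence of $g_i^2=1$, and indeed for suitable $Q\in U_2$ one gets $\Gamma_Q=\mathbb Z$, which is not a marked quotient of $\mathbb Z_2*\mathbb Z_2$. This particular slip is harmless, because the character conjecture only needs that $\pi_1$ is injective on $C(G)_{central}$ --- one $Q$ working for all $P$ suffices without any tameness --- and your argument for that injectivity (the image $\rho=\sum_ig_i$ of $\chi$ generates a free polynomial, resp.\ $*$-polynomial, algebra in $C^*(\mathbb Z_2^{*N})$, resp.\ $C^*(F_N)$) is the paper's.

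The genuine gap is the character conjecture for $S_N^+$ and $H_N^+$, which you correctly identify as the hard point but then leave unresolved, anticipating a heavy Tannakian argument. The paper's resolution is much more elementary and you should be able to complete it: the fusion rules of $S_N^+$ ($N\geq4$) are Clebsch--Gordan, so $C(S_N^+)_{central}$ is the polynomial algebra on the single character $\chi=\sum_iu_{ii}$; hence a \emph{single} well-chosen $Q$ works for all central $P$. Concretely, at $N=4$ take $Q=\mathrm{diag}(F_2,F_2)$, so $\Gamma_Q=\mathbb Z_2*\mathbb Z_2=D_\infty$ and $\chi\mapsto 2+g+h$, which generates a polynomial algebra in $C^*(D_\infty)$; the case $N\geq5$ then follows by functoriality through $S_4^+\subset S_N^+$. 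For $H_N^+$ one needs the extra input from the fusion-rule computation that $C(H_N^+)_{central}=\mathbb C[\chi,\chi']$ with $\chi'=\sum_iu_{ii}^2$; at $N=2$, with $Q=F_2$ one gets $\Gamma_Q=D_\infty$ and $\chi\mapsto g+h$, $\chi'\mapsto 1+gh$, which again generate a polynomial algebra, and $N\geq3$ follows via $H_2^+\subset H_N^+$. Without this (or an equivalent) argument, four of the twelve assertions remain unproved in your proposal. Finally, for growth you should say explicitly that the polynomial-growth cases are exactly the infinite coamenable ones $O_2^+$, $S_4^+$, $H_2^+$, where the twisting identifications $SU_2^{-1}$, $SO_3^{-1}$, $O_2^{-1}$ give polynomial growth of $\widehat{G}$, and all fibres $\Gamma_Q$ are virtually abelian.
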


\begin{proof}
We have $3\times4=12$ assertions to be proved, and the idea in each case will be that of using certain special group dual subgroups. We will mostly use the group dual subgroups coming at $Q=1$, which are well-known to be as follows:
$$G=O_N^+,U_N^+,S_N^+,H_N^+\implies\Gamma_1=\mathbb Z_2^{*N},F_N,\{1\},\mathbb Z_2^{*N}$$ 

However, for some of our 12 questions, using these subgroups will not be enough, and we will use as well some carefully chosen subgroups of type $\Gamma_Q$, with $Q\neq1$.

As a last ingredient, we will need some specialized structure results for $G$, in the cases where $G$ is coamenable. Once again, the theory here is well-known, and the situations where $G=O_N^+,U_N^+,S_N^+,H_N^+$ is coamenable, along with the values of $G$, are as follows:
$$\begin{cases}
O_2^+=SU_2^{-1}\\
S_2^+=S_2,S_3^+=S_3,S_4^+=SO_3^{-1}\\
H_2^+=O_2^{-1}
\end{cases}$$

To be more precise, the equalities $S_N^+=S_N$ at $N\leq3$ are known since Wang's paper \cite{wa2}, and the twisting results are all well-known, and we refer here to \cite{bbd}, \cite{byu}.

With these ingredients in hand, we can now go ahead with the proof. It is technically convenient to split the discussion over the 3 conjectures, as follows:

(1) Characters. For $G=O_N^+,U_N^+$, it is known that the algebra $C(G)_{central}$ is polynomial, respectively $*$-polynomial, on the variable $\chi=\sum_iu_{ii}$. Thus, it is enough to show that the variable $\rho=\sum_ig_i$ generates a polynomial, respectively $*$-polynomial algebra, inside the group algebra of the discrete groups $\mathbb Z_2^{*N},F_N$. But for $\mathbb Z_2^{*N}$ this is clear, and by using a multiplication by a unitary free from $\mathbb Z_2^{*N}$, the result holds as well for $F_N$.

Regarding now $G=S_N^+$, we have three cases to be discussed, as follows:

-- At $N=2,3$ this quantum group collapses to the usual permutation group $S_N$, and according to Proposition 4.1 above, the character conjecture holds indeed. 

-- At $N=4$ we have $S_4^+=SO_3^{-1}$, the fusion rules are well-known to be the Clebsch-Gordan ones, and the algebra $C(G)_{central}$ is therefore polynomial on $\chi=\sum_iu_{ii}$. Now observe that Theorem 2.6 gives, with $Q=diag(F_2,F_2)$, the following discrete group: 
$$\Gamma_Q=\mathbb Z_2*\mathbb Z_2=D_\infty$$

Since $Tr(u)=Tr(Q^*uQ)$, the image of $\chi=\sum_iu_{ii}$ in the quotient $C^*(\Gamma_Q)$ is the variable $\rho=2+g+h$, where $g,h$ are the generators of the two copies of $\mathbb Z_2$. Now since this latter variable generates a polynomial algebra, we obtain the result. 

-- At $N\geq5$ the fusion rules are once again known to be the Clebsch-Gordan ones, the algebra $C(G)_{central}$ is, as before, polynomial on $\chi=\sum_iu_{ii}$, and the result follows by functoriality from the result at $N=4$, by using the embedding $S_4^+\subset S_N^+$.

Regarding now $G=H_N^+$, here it is known, from the computations in \cite{bv2}, that the algebra $C(G)_{central}$ is polynomial on the following two variables:
$$\chi=\sum_iu_{ii}\quad,\quad\chi'=\sum_iu_{ii}^2$$

We have two cases to be discussed, as follows:

-- At $N=2$ we have $H_2^+=O_2^{-1}$, and, as explained in \cite{bbd}, with $Q=F_2$ we have $\Gamma_Q=D_\infty$. Let us compute now the images $\rho,\rho'$ of the variables $\chi,\chi'$ in the group algebra of $D_\infty$. As before, from $Tr(u)=Tr(Q^*uQ)$ we obtain $\rho=g+h$, where $g,h$ are the generators of the two copies of $\mathbb Z_2$. Regarding now $\rho'$, let us first recall that the quotient map $C(H_2^+)\to C^*(D_\infty)$ is constructed as follows: 
$$\frac{1}{2}\begin{pmatrix}1&1\\1&-1\end{pmatrix}\begin{pmatrix}u_{11}&u_{12}\\u_{21}&u_{22}\end{pmatrix}\begin{pmatrix}1&1\\1&-1\end{pmatrix}\to\begin{pmatrix}g&0\\0&h\end{pmatrix}$$

Equivalently, this quotient map is constructed as follows:
$$\begin{pmatrix}u_{11}&u_{12}\\u_{21}&u_{22}\end{pmatrix}\to\frac{1}{2}\begin{pmatrix}1&1\\1&-1\end{pmatrix}\begin{pmatrix}g&0\\0&h\end{pmatrix}\begin{pmatrix}1&1\\1&-1\end{pmatrix}=\frac{1}{2}\begin{pmatrix}g+h&g-h\\g-h&g+h\end{pmatrix}$$

We can now compute the image of our character, as follows:
$$\rho'=\frac{1}{2}(g+h)^2=\frac{1}{2}(2+2gh)=1+gh$$

By using now the elementary fact that the variables $\rho=g+h$ and $\rho'=1+gh$ generate a polynomial algebra inside $C^*(D_\infty)$, this gives the result. 

-- Finally, at $N\geq3$ the result follows by functoriality, via the standard diagonal inclusion $H_2^+\subset H_N^+$, from the result at $N=2$, that we established above. 

(2) Amenability. Here the cases where $G$ is not coamenable are those of $O_N^+$ with $N\geq3$, $U_N^+$ with $N\geq2$, $S_N^+$ with $N\geq5$, and $H_N^+$ with $N\geq3$. For $G=O_N^+,H_N^+$ with $N\geq3$ the result is clear, because $\Gamma_1=\mathbb Z_2^{*N}$ is not amenable. Clear as well is the result for $U_N^+$ with $N\geq2$, because $\Gamma_1=F_N$ is not amenable. Finally, for $S_N^+$ with $N\geq5$ the result holds as well, because of the presence of Bichon's group dual subgroup $\widehat{\mathbb Z_2*\mathbb Z_3}$.

(3) Growth. Here the growth is polynomial precisely in the situations where $G$ is infinite and coamenable, the precise cases being $O_2^+=SU_2^{-1}$, $S_4^+=SO_3^{-1}$, $H_2^+=O_2^{-1}$, and the result follows from the fact that the growth invariants are stable by twisting. 
\end{proof}

We can see from the above proof that the verification of the conjectures basically requires to know how to compute $\Gamma_Q$, and to know the representation theory of $G$.

There are many other situations where these two technical ingredients are available, at least to some extent. Without getting into details here, let us just mention that: (1) the product operations $\times,\hat{*}$ can be investigated by using \cite{wa1}, (2) the free complexification operation can be investigated by using \cite{rau}, (3) for deformations, evidence comes from \cite{byu}, \cite{nya}, (4) for free wreath products, evidence comes from \cite{lem}, \cite{lta}, (5) the two-parametric free quantum groups can be studied by using \cite{bs1}, and (6) for the various growth conjectures, substantial evidence comes from the computations in \cite{bv1}, \cite{dpr}.

In short, there is a lot of work to be done. In what follows we will do a part of this work, in relation with two key constructions, coming from Tannakian philosophy. 

\section{Half-liberation}

One interesting discovery coming from Tannakian philosophy is the fact that the commutation relations $ab=ba$ can be succesfully replaced, in relation with several quantum group questions, with the half-commutation relations $abc=cba$. Diagramatically:
$$\xymatrix@R=12mm@C=10mm{\circ\ar@{-}[dr]&\circ\ar@{-}[dl]\\\circ&\circ}
\qquad\xymatrix@R=5mm@C=5mm{\\ \to\\ }\qquad
\xymatrix@R=12mm@C=5mm{\circ\ar@{-}[drr]&\circ\ar@{-}[d]&\circ\ar@{-}[dll]\\\circ&\circ&\circ}$$

We use here the following notions, coming from \cite{bsp}, \cite{bv3}:

\begin{definition}
The half-classical orthogonal group $O_N^*$ is given by:
$$C(O_N^*)=C(O_N^+)\Big/\Big<abc=cba,\forall a,b,c\in\{u_{ij}\}\Big>$$
The closed quantum subgroups $G\subset O_N^*$ are called half-classical.
\end{definition}

To be more precise now, this definition is motivated by a result from \cite{bv3}, stating that there are exactly 3 categories of pairings, namely those generated by $\emptyset$, $\slash\hskip-2.0mm\backslash$, $\slash\hskip-2.0mm\backslash\hskip-1.69mm|\hskip0.5mm$. Equivalently, there are exactly 3 orthogonal easy quantum groups, namely:
$$O_N\subset O_N^*\subset O_N^+$$

We will be back to these topics in section 6 below. For the moment, we will just use Definition 5.1 as it is, and we refer to \cite{bsp}, \cite{bv3} for where this definition comes from.

We will prove here that the 3 conjectures hold for any half-classical quantum group. In order to do so, we can use the modern approach from \cite{bdu}, which is as follows:

\begin{proposition}
Given a conjugation-stable closed subgroup $H\subset U_N$, consider the algebra $C([H])\subset M_2(C(H))$ generated by the following variables:
$$u_{ij}=\begin{pmatrix}0&v_{ij}\\ \bar{v}_{ij}&0\end{pmatrix}$$
Then $[H]$ is a compact quantum group, we have $[H]\subset O_N^*$, and any non-classical subgroup $G\subset O_N^*$ appears in this way, with $G=O_N^*$ itself appearing from $H=U_N$.
\end{proposition}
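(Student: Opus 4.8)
The plan is to verify the three defining claims of Proposition 5.3 in turn: that $[H]$ is a compact matrix quantum group in the sense of Definition 1.1, that $[H]\subset O_N^*$, and that every non-classical $G\subset O_N^*$ arises this way, with $O_N^*$ itself coming from $H=U_N$. First I would check the quantum group axioms. Writing $u_{ij}=\begin{pmatrix}0&v_{ij}\\ \bar v_{ij}&0\end{pmatrix}$ inside $M_2(C(H))$, one sees $u=\bar u$ by construction, and since $v$ is the fundamental (unitary, with $v^t=v^{-1}$) corepresentation of $H\subset U_N$, a short computation shows $u$ is orthogonal: $\sum_k u_{ik}u_{jk}$ has diagonal $M_2$-entries $\sum_k v_{ik}\bar v_{jk}=\delta_{ij}$ and $\sum_k\bar v_{ik}v_{jk}=\delta_{ij}$, and vanishing off-diagonal entries. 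So $u\in M_N(C([H]))$ is a unitary with $u=\bar u$, $u^t=u^{-1}$. For $\Delta$, one defines $\Delta(u_{ij})=\sum_k u_{ik}\otimes u_{kj}$ and checks this is well-defined into $C([H])\otimes C([H])$; the point is that the $2\times 2$ matrix entries multiply so that the "off-diagonal" grading (the $v$ versus $\bar v$ slots) behaves like a $\mathbb Z_2$-grading, and the comultiplication of $H$ respects it. Similarly $\varepsilon(u_{ij})=\delta_{ij}$ and $S(u_{ij})=u_{ji}$ extend the counit and antipode of $H$. This step is essentially the content of \cite{bdu} and I would cite it rather than expand every bracket.

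Next I would prove $[H]\subset O_N^*$, i.e. the generators $u_{ij}$ satisfy $abc=cba$ whenever $a,b,c\in\{u_{ij}\}$. This is where the $M_2$ trick pays off: a product of an \emph{odd} number of the matrices $u_{ij}$ lands in the off-diagonal part of $M_2(C(H))$, whose entries come from $C(H)$, which is \emph{commutative}. Concretely, $u_{i_1j_1}u_{i_2j_2}u_{i_3j_3}$ has off-diagonal $M_2$-entries of the form $v_{i_1j_1}\bar v_{i_2j_2}v_{i_3j_3}$ and its conjugate, and since these live in the commutative algebra $C(H)$ they are unchanged under reversing the order of the three factors — which is exactly $abc=cba$. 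Hence the quotient map $C(O_N^+)\to C([H])$ factors through $C(O_N^*)$, giving the inclusion $[H]\subset O_N^*$ of quantum groups.

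For the surjectivity statement, I would start from an arbitrary non-classical closed subgroup $G\subset O_N^*$, with fundamental corepresentation $w=(w_{ij})$ satisfying $w=\bar w$ and $w_{ij}w_{kl}w_{pq}=w_{pq}w_{kl}w_{ij}$. The key structural fact, again from \cite{bdu} (ultimately a $2\times 2$ matrix model / "crossed product by $\mathbb Z_2$" argument in the spirit of the diagram $\circ\ar@{-}[dr]\;\circ\ar@{-}[dl]$ passing to $abc=cba$), is that the half-commutation relations force the algebra generated by products of \emph{two} generators to be commutative, so that $C(G)$ embeds into $M_2$ of its "even part", and the even part is (a quotient of) $C(H)$ for a conjugation-stable $H\subset U_N$; reading this off gives $G=[H]$. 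When $G$ is genuinely non-classical this $H$ is non-abelian-acting enough that the construction is nondegenerate, and the maximal case $G=O_N^*$ corresponds to taking $H$ as large as possible, namely $H=U_N$, since then $C([U_N])=M_2$ of $C(U_N)$ and one checks its even part realizes exactly the universal half-commutative orthogonal relations. I expect this last part — identifying the even subalgebra of a general half-classical $C(G)$ with a commutative $C(H)$ for conjugation-stable $H\subset U_N$, and checking the correspondence is a bijection onto non-classical subgroups — to be the main obstacle, since it is the one place where one genuinely needs the classification machinery of \cite{bdu} rather than a direct matrix computation; the other two parts are routine $M_2$-bookkeeping. I would therefore lean on \cite{bdu} for that step and only sketch the verification that $H=U_N$ yields $O_N^*$.
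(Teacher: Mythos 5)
Your proposal is correct and follows essentially the same route as the paper: a direct $M_2(C(H))$ computation showing the generators are self-adjoint, orthogonal and half-commuting (hence $[H]\subset O_N^*$), the $\mathbb Z_2$-grading/crossed-product picture $C([H])\subset C(H)\rtimes\mathbb Z_2$ for the quantum group axioms, and an appeal to Bichon--Dubois-Violette for the non-trivial surjectivity statement, exactly as the paper does. The only blemish is the parenthetical claim that the fundamental corepresentation of $H\subset U_N$ satisfies $v^t=v^{-1}$ --- unitarity gives $v^*=v^{-1}$, not $v^t=v^{-1}$ --- but your actual orthogonality computation only uses $vv^*=v^*v=1$, so nothing is affected.
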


\begin{proof}
The $2\times2$ matrices in the statement are self-adjoint, half-commute, and the $N\times N$ matrix $u=(u_{ij})$ that they form is orthogonal, so we have an embedding $[H]\subset O_N^*$. The quantum group property of $[H]$ is also elementary to check, by using an alternative, equivalent construction, with a quantum group embedding as follows:
$$C([H])\subset C(H)\rtimes\mathbb Z_2$$

The surjectivity part is non-trivial, and we refer here to \cite{bdu}.
\end{proof}

We will need as well the following result, also from \cite{bdu}:

\begin{proposition}
We have a bijection $Irr([H])\simeq Irr_0(H)\coprod Irr_1(H)$, where
$$Irr_k(H)=\left\{r\in Irr(H)\Big|\exists l\in\mathbb N,r\in u^{\otimes k}\otimes(u\otimes\bar{u})^{\otimes l}\right\}$$
induced by the canonical identification $Irr(H\rtimes\mathbb Z_2)\simeq Irr(H)\coprod Irr(H)$.
\end{proposition}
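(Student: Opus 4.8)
The plan is to carry out everything inside the crossed product $C(H)\rtimes\mathbb{Z}_2$. As recalled above (and in \cite{bdu}), $C([H])$ sits as a Hopf $*$-subalgebra of $C(H)\rtimes\mathbb{Z}_2$, where $\mathbb{Z}_2$ acts by the conjugation automorphism $\theta$, $\theta(v_{ij})=\bar v_{ij}$ (here $v$ denotes the fundamental corepresentation of $H$), and where the generator $\tau$ of $\mathbb{Z}_2$ is group-like. Consequently $C(H)\rtimes\mathbb{Z}_2=C(H)\oplus C(H)\tau$ is a $\mathbb{Z}_2$-graded Hopf algebra with $\Delta$ preserving the grading; its degree-$0$ part is the Hopf subalgebra $C(H)$; and every irreducible corepresentation of $H\rtimes\mathbb{Z}_2$ is homogeneous, the degree-$0$ ones being precisely those of $H$ and $r\mapsto r\otimes\tau$ identifying the degree-$1$ ones with $Irr(H)$. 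This is the canonical identification $Irr(H\rtimes\mathbb{Z}_2)\simeq Irr(H)\coprod Irr(H)$ appearing in the statement; it holds because $\tau$ is group-like of order two and $C(H)$ is a Hopf subalgebra, so the automorphism $\theta$ affects only the fusion rules of $H\rtimes\mathbb{Z}_2$, not its list of irreducibles.

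First I would locate the fundamental corepresentation $u$ of $[H]$ in this picture. Via $C(H)\rtimes\mathbb{Z}_2\subset M_2(C(H))$, $x\mapsto diag(x,\theta(x))$, $\tau\mapsto\left(\begin{smallmatrix}0&1\\1&0\end{smallmatrix}\right)$, the generator $u_{ij}=\left(\begin{smallmatrix}0&v_{ij}\\ \bar v_{ij}&0\end{smallmatrix}\right)$ becomes $v_{ij}\tau$; hence, as a corepresentation of $H\rtimes\mathbb{Z}_2$, $u$ is exactly $v\otimes\tau$. In particular $\bar u=u$ (as it must, since $[H]\subset O_N^*$), and $\chi_u=\chi_v\tau$. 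Since $C([H])$ is generated by the entries of $u$ and is a Hopf $*$-subalgebra of $C(H)\rtimes\mathbb{Z}_2$, the corepresentation category of $[H]$ is the full tensor subcategory of that of $H\rtimes\mathbb{Z}_2$ generated by $u$; thus $Irr([H])$ is exactly the set of irreducibles of $H\rtimes\mathbb{Z}_2$ occurring inside some $u^{\otimes m}$, and irreducibility and non-isomorphism of these constituents are inherited from $H\rtimes\mathbb{Z}_2$ (this is the whole point of enlarging $[H]$ to $H\rtimes\mathbb{Z}_2$).

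The remaining, computational, step is to rewrite the powers of $u$ in terms of $v$. Using $\tau y=\theta(y)\tau$ and $\tau^2=1$, one finds $\chi_u^2=(\chi_v\tau)(\chi_v\tau)=\chi_v\,\theta(\chi_v)=\chi_v\chi_{\bar v}=\chi_{v\otimes\bar v}$, so $u^{\otimes 2}\simeq v\otimes\bar v$, which lies in degree $0$. Iterating, $u^{\otimes 2l}\simeq(v\otimes\bar v)^{\otimes l}$ (degree $0$) and $u^{\otimes 2l+1}\simeq\big((v\otimes\bar v)^{\otimes l}\otimes v\big)\otimes\tau$ (degree $1$). Hence the degree-$0$ part of $Irr([H])$ is $\{r\in Irr(H):r\subset(v\otimes\bar v)^{\otimes l}\text{ for some }l\}=Irr_0(H)$, and its degree-$1$ part is $\{w\otimes\tau:w\subset(v\otimes\bar v)^{\otimes l}\otimes v\text{ for some }l\}$. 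Commutativity of the fusion ring of $H$ gives $[(v\otimes\bar v)^{\otimes l}\otimes v]=[v\otimes(v\otimes\bar v)^{\otimes l}]$, so the latter set is $Irr_1(H)$. Reading this off through $Irr(H\rtimes\mathbb{Z}_2)\simeq Irr(H)\coprod Irr(H)$ yields the asserted bijection $Irr([H])\simeq Irr_0(H)\coprod Irr_1(H)$.

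The main obstacle is less a deep idea than the careful handling of the $\mathbb{Z}_2$-twist: since multiplying by $\tau$ conjugates "every other tensor leg", one must make sure the powers of $u=v\otimes\tau$ collapse to exactly the tensor words $(v\otimes\bar v)^{\otimes l}$ and $v\otimes(v\otimes\bar v)^{\otimes l}$ that define $Irr_k(H)$ — this is precisely what the character identity $\chi_u^2=\chi_{v\otimes\bar v}$ and the fusion-ring commutativity deliver — and one must check that no irreducible of $[H]$ splits or merges when regarded inside $H\rtimes\mathbb{Z}_2$, which rests on $C([H])\hookrightarrow C(H)\rtimes\mathbb{Z}_2$ being an injective Hopf $*$-algebra map so that Hom-spaces agree. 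Secondarily, the identification $Irr(H\rtimes\mathbb{Z}_2)\simeq Irr(H)\coprod Irr(H)$ itself should be established (or quoted from \cite{bdu}) with some care, as above.
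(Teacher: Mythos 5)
Your argument is correct, and it is in fact a complete proof of a statement that the paper only sketches. The paper's own proof runs in two lines: it observes the equality of projective versions $P[H]=PH$, which yields $Irr_0(H)=Irr(PH)\subset Irr([H])$, and then asserts that the remaining irreducibles are accounted for by $Irr_1(H)$, deferring the details to Bichon--Dubois-Violette. You instead work entirely inside the $\mathbb Z_2$-graded crossed product $C(H)\rtimes\mathbb Z_2=C(H)\oplus C(H)\tau$, identify $u$ with $v\otimes\tau$, and compute $u^{\otimes 2l}\simeq(v\otimes\bar v)^{\otimes l}$ and $u^{\otimes 2l+1}\simeq(v\otimes\bar v)^{\otimes l}\otimes v\otimes\tau$; the degree-$0$ and degree-$1$ constituents then give $Irr_0(H)$ and $Irr_1(H)$ respectively, with commutativity of the representation ring of the classical group $H$ reconciling the two orderings of the tensor word. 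This route has real advantages: the grading makes the disjointness of the two pieces automatic (an irreducible $r$ of $H$ appearing in both $Irr_0$ and $Irr_1$ contributes two distinct irreducibles $r$ and $r\otimes\tau$ of $[H]$, which the projective-version argument leaves slightly implicit), and your identity $u^{\otimes 2}\simeq v\otimes\bar v$ is precisely the content of the paper's unproved claim $P[H]=PH$. The only points worth flagging are housekeeping: you should say explicitly that conjugation-stability of $H$ is what makes $\theta$ a well-defined order-two automorphism, and that the passage from ``coefficients of $u^{\otimes m}$ span $C([H])$'' to ``$Irr([H])$ equals the set of irreducible constituents of the $u^{\otimes m}$, with Hom-spaces unchanged'' is Peter--Weyl for the Hopf $*$-subalgebra $C([H])\subset C(H)\rtimes\mathbb Z_2$; both are standard and you do gesture at them.
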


\begin{proof}
We have an equality of projective versions $P[H]=PH$, and so an inclusion $Irr_0(H)=Irr(PH)\subset Irr([H])$. The remaining irreducible representations of $[H]$ must come from an inclusion $Irr_1(H)\subset Irr([H])$, appearing as above. See \cite{bdu}.
\end{proof}

Regarding now the maximal tori, the situation is very simple, as follows:

\begin{proposition}
The group dual subgroups $\widehat{[\Gamma]}_Q\subset[H]$ appear via
$$[\Gamma]_Q=[\Gamma_Q]$$
from the group dual subgroups $\widehat{\Gamma}_Q\subset H$ associated to $H\subset U_N$.
\end{proposition}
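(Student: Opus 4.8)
The plan is to compute the relations defining $C^*([\Gamma]_Q)$ inside the crossed-product model of $C([H])$ from the proof of Proposition 5.3, and to recognize the quotient as $C([\widehat{\Gamma}_Q])$. Recall that this model is $C([H])\subset C(H)\rtimes\mathbb{Z}_2$, with $u_{ij}=\sigma v_{ij}$, where $v$ is the fundamental corepresentation of $H$ and $\sigma$ is the order $2$ unitary implementing the conjugation automorphism $\beta$ of $C(H)$. Since the matrix $Q$ acts only on the ``$N$''-index, and its entries are scalars, one has $(QuQ^*)_{ij}=\sigma(QvQ^*)_{ij}$. Hence the relations $(QuQ^*)_{ij}=0$ for $i\neq j$ that, by Definition 1.4, define $C^*([\Gamma]_Q)$, are equivalent to the relations $(QvQ^*)_{ij}=0$ for $i\neq j$ inside $C(H)$ — which are precisely the relations producing the quotient $C(H)\to C(\widehat{\Gamma}_Q)$, again by Definition 1.4 but applied to $H\subset U_N$. (For $Q\in O_N$, which is the relevant range here, the ideal $I=\langle(QvQ^*)_{ij}:i\neq j\rangle\subset C(H)$ is $\beta$-invariant, so $\widehat{\Gamma}_Q\subset H$ is conjugation-stable and $[\widehat{\Gamma}_Q]$ is defined; for general $Q$ one must enlarge $I$ to its $\beta$-saturation, i.e. replace $\widehat{\Gamma}_Q$ by $\widehat{\Gamma}_Q\cap\widehat{\Gamma}_{\bar Q}$.)

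Granting $I$ to be $\beta$-invariant, I would then pass to the quotient $(C(H)\rtimes\mathbb{Z}_2)/\langle I\rangle=C(\widehat{\Gamma}_Q)\rtimes\mathbb{Z}_2$ and restrict the quotient map to $C([H])$. Its image is the $C^*$-algebra generated by the elements $\sigma\widetilde{v}_{ij}$ (the images of $u_{ij}$), which is exactly $C([\widehat{\Gamma}_Q])$ by the very construction of $[\,\cdot\,]$; and since $(QuQ^*)_{ij}=\sigma(QvQ^*)_{ij}\in\langle I\rangle$ for $i\neq j$, the restricted map factors through $C^*([\Gamma]_Q)=C([H])/J$, where $J$ denotes the ideal of $C([H])$ generated by $\{(QuQ^*)_{ij}:i\neq j\}$. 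We thus obtain a surjection $\Phi:C^*([\Gamma]_Q)\twoheadrightarrow C([\widehat{\Gamma}_Q])=C^*([\Gamma_Q])$, which one checks coincides with the map obtained by applying functoriality of $[\,\cdot\,]$ to the inclusion $\widehat{\Gamma}_Q\subset H$; dually this is an inclusion of group dual subgroups $\widehat{[\Gamma_Q]}\subset\widehat{[\Gamma]}_Q$ of $[H]$.

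The main obstacle, and the only genuinely non-formal step, is to show that $\Phi$ is injective, equivalently that $J=C([H])\cap\langle I\rangle$ (the inclusion $\subseteq$ being clear). The plan here is to use that the $\mathbb{Z}_2$-grading of $C(H)\rtimes\mathbb{Z}_2$ restricts to $C([H])$, to $\langle I\rangle=I\oplus I\sigma$ and to $J$: it is then enough to show that a homogeneous element of $C([H])$ lying in $\langle I\rangle$ can be rewritten using the generators $(QuQ^*)_{ij}=\sigma(QvQ^*)_{ij}$ of $J$ together with elements of $C([H])$. This reduces to the structural fact that the degree $0$ and degree $1$ parts of $C([H])$ are spanned, as bimodules over the ``projective'' subalgebra $\overline{\mathrm{span}}\{v_{ij}\bar{v}_{kl}\}$, by the monomials in the $u_{ij}$, so that any factor $(QvQ^*)_{ij}$ with $i\neq j$ occurring in such a monomial may legitimately be traded for $(QuQ^*)_{ij}$. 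Once this is done, $C^*([\Gamma]_Q)=C([H])/J$ embeds into $C(\widehat{\Gamma}_Q)\rtimes\mathbb{Z}_2$ with image $C([\widehat{\Gamma}_Q])$, which gives $\widehat{[\Gamma]}_Q=[\widehat{\Gamma}_Q]$, i.e. $[\Gamma]_Q=[\Gamma_Q]$. Alternatively, since both algebras are cocommutative, one may argue on the group side: $\Phi$ is a quotient map $[\Gamma]_Q\to[\Gamma_Q]$ between groups generated by the involutions $\widehat{w}_i=(Q\widehat{u}Q^*)_{ii}$, and its injectivity amounts to the assertion that every relation among the $\widehat{w}_i$ forced by $C([H])/J$ already holds in $[\Gamma_Q]$ — which is again the bimodule statement above.
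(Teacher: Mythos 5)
Your strategy is essentially the paper's: the paper works in the $M_2(C(H))$ model and produces the same factorization (the quotient $C(H)\to C(\widehat{\Gamma}_Q)$ induces a map of the ambient algebra which restricts to $C([H])\to C([\widehat{\Gamma}_Q])$ and kills the off-diagonal entries of $QuQ^*$), while you use the equivalent crossed-product model; the difference is cosmetic. Where you go beyond the paper is in isolating the injectivity of $\Phi$, i.e.\ the equality $J=C([H])\cap\langle I\rangle$, as the one non-formal step -- the paper simply asserts that the left vertical arrow ``is obtained by imposing the relations $u_{ij}=0$''. Your reduction via the $\mathbb{Z}_2$-grading is the right move, but the spanning claim it rests on is left unproved; since $C(H)$ is commutative it can be closed by a hull-kernel argument on each homogeneous component (e.g.\ the degree-zero part of $J$ is a closed ideal of $C(PH)$ whose hull is exactly the image of the torus in $PH$, hence coincides with the degree-zero part of $C([H])\cap\langle I\rangle$), so this gap is fillable. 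One small slip: in your last sentence the group-theoretic reformulation is reversed -- injectivity of $\Phi$ means that every relation holding in $[\Gamma_Q]$ already holds in $[\Gamma]_Q$, not the converse.

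The substantive issue is your restriction to $Q\in O_N$, which you assert without justification to be ``the relevant range''. It is not, a priori: for orthogonal quantum groups the interesting tori arise precisely from complex $Q$ (the Fourier matrices in Theorem 2.6 and in the proof of Theorem 4.4), and Theorem 5.5 invokes this proposition for a $Q$ diagonalizing a maximal torus of $H$, which need not be real. On the other hand, your parenthetical remark is pointing at something genuine and should be moved to the center of the argument: for non-real $Q$ the element $(QuQ^*)_{ij}$ has $(QvQ^*)_{ij}$ in one corner but $(Q\bar{v}Q^*)_{ij}$ -- which is not $\overline{(QvQ^*)_{ij}}$ -- in the other, so killing it cuts $H$ down to $\widehat{\Gamma}_Q\cap\widehat{\Gamma}_{\bar{Q}}$, exactly as you say. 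In fact the formula $[\Gamma]_Q=[\Gamma_Q]$ fails verbatim for generic complex $Q$: for $H=U_N$ one has $[H]=O_N^*$ and $\Gamma_Q=\mathbb{Z}^N$ for every $Q$, so $[\Gamma_Q]$ is infinite, whereas by Proposition 2.2\,(2) the group $\Gamma_Q(O_N^+)$, hence its quotient $[\Gamma]_Q$, is a quotient of $\mathbb{Z}_2$ whenever $QQ^t$ has no zero entries. (The paper's own reduction of general $Q$ to $Q=1$, via $u\mapsto QuQ^*$, tacitly assumes that $([H],QuQ^*)$ is again of the form $[H']$, which likewise forces $Q$ to be real up to a phase.) So you must either prove the statement for $Q\in O_N$ and flag the restriction explicitly, or prove the corrected general statement with $\widehat{\Gamma}_Q\cap\widehat{\Gamma}_{\bar{Q}}$ in place of $\widehat{\Gamma}_Q$; as written, the parenthesis hides the fact that the proposition as stated is not what is being proved.
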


\begin{proof}
Let us first discuss the case $Q=1$. Consider the diagonal subgroup $\widehat{\Gamma}_1\subset H$, with the associated quotient map $C(H)\to C(\widehat{\Gamma}_1)$ denoted $v_{ij}\to\delta_{ij}h_i$. At the level of the algebras of $2\times2$ matrices, this map induces a quotient map $M_2(C(H))\to M_2(C(\widehat{\Gamma}_1))$, and our claim is that we have a factorization, as follows:
$$\begin{matrix}
C([H])&\subset&M_2(C(H))\\
\\
\downarrow&&\downarrow\\
\\
C([\widehat{\Gamma}_1])&\subset&M_2(C(\widehat{\Gamma}_1))
\end{matrix}$$

Indeed, it is enough to show that the standard generators of $C([H])$ and of $ C([\widehat{\Gamma}_1])$ map to the same elements of $M_2(C(\widehat{\Gamma}_1))$. But these generators map indeed as follows:
$$\begin{matrix}
u_{ij}&\to&\begin{pmatrix}0&v_{ij}\\ \bar{v}_{ij}&0\end{pmatrix}\\
\\
&&\downarrow\\
\\
\delta_{ij}v_{ij}&\to&\begin{pmatrix}0&\delta_{ij}h_i\\ \delta_{ij}h_i^{-1}&0\end{pmatrix}
\end{matrix}$$

Thus we have the above factorization, and since the map on the left is obtained by imposing the relations $u_{ij}=0$ with $i\neq j$, we obtain $[\Gamma]_1=[\Gamma_1]$, as desired.

In the general case now, $Q\in U_N$, the result follows by applying the above $Q=1$ result to the quantum group $[H]$, with fundamental corepresentation $w=QuQ^*$.
\end{proof}

Now back to our conjectures, we have the following result:

\begin{theorem}
The $3$ conjectures hold for any half-classical quantum group of the form $[H]\subset O_N^*$, with $H\subset U_N$ being connected.
\end{theorem}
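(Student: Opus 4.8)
The plan is to reduce everything to the classical group $H$, using the structural results of Propositions 5.3--5.5. The point is that Proposition 5.4 realizes $Irr([H])$ as $Irr_0(H)\sqcup Irr_1(H)$ with preserved dimensions, while Proposition 5.5 identifies the maximal tori of $[H]$ with the groups $[\Gamma_Q]$ associated to the maximal tori $\Gamma_Q$ of $H$. Since $H\subset U_N$ is a compact connected Lie group, each $\widehat{\Gamma_Q}=H\cap Q^*\mathbb T^NQ$ is a compact abelian group, so each $\Gamma_Q$ is finitely generated abelian; and then each $[\Gamma_Q]$ is virtually abelian, being generated by finitely many order $2$ elements whose even-length words form an abelian subgroup of index $\leq 2$ (a generalized dihedral type group). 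In particular every $[\Gamma_Q]$ has polynomial growth and is amenable.

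For the amenability and growth conjectures, the implications $\Longrightarrow$ are trivial, so it only remains to check that $\widehat{[H]}$ is amenable and of polynomial growth. Using the matrix model $C([H])\subset M_2(C(H))$ of Proposition 5.3, I would first note that the tensor powers $u^{\otimes m}$ of $[H]$ decompose as a sum of two mutually conjugate corepresentations of $H$, built from $v$ and $\bar v$, sitting on the diagonal of $M_2$ when $m$ is even and off the diagonal when $m$ is odd; in view of Proposition 5.4 this makes the word metric on $Irr([H])$ comparable to that on $Irr(H)$, with matching dimensions, so that the weighted balls of $Irr([H])$ are polynomially bounded in terms of those of $Irr(H)$. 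Since $H$ is a compact connected Lie group, $\widehat{H}$ has polynomial growth -- the classical case of the growth conjecture, established in \cite{bv1}, \cite{dpr} -- hence so does $\widehat{[H]}$, and this forces $\widehat{[H]}$ to be amenable, i.e.\ $[H]$ to be coamenable. Thus both sides of Conjectures 3.2 and 3.3 hold, and these two conjectures are verified. (Coamenability of $[H]$ may also be seen directly from the embedding $C([H])\subset C(H)\rtimes\mathbb Z_2$, which presents $[H]$ as a quantum quotient of $\widehat{\widehat{H}\rtimes\mathbb Z_2}$, coamenable since $\widehat{H}\rtimes\mathbb Z_2$ is an amenable discrete quantum group.)

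For the character conjecture, the crucial step is to locate $C([H])_{central}$ inside $M_2(C(H))$. Combining Proposition 5.4 with the decomposition of $u^{\otimes m}$ just described, the character of an irreducible corepresentation of $[H]$ is a $2\times2$ matrix which is diagonal if the corepresentation belongs to $Irr_0(H)$ and antidiagonal if it belongs to $Irr_1(H)$, and whose entries are characters of $H$-representations. Hence $C([H])_{central}\subset M_2\big(C(H)_{central}\big)$, and any nonzero $P\in C([H])_{central}$ has a nonzero matrix entry that is a nonzero class function on $H$. Now, $H$ being connected, the character conjecture holds for $H$ by Proposition 4.1: one can pick $Q\in U_N$ conjugating a maximal torus $T\subset H$ into $\mathbb T^N$, and then $\pi_Q^{H}:C(H)\to C^*(\Gamma_Q)$ is faithful on $C(H)_{central}$, since class functions on $H$ restrict faithfully to $T\subset\widehat{\Gamma_Q}$. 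By the commuting square in the proof of Proposition 5.5, $\pi_Q^{[H]}$ acts by applying $\pi_Q^{H}$ entrywise in $M_2$, so it does not annihilate the surviving entry of $P$; thus $\pi_Q^{[H]}(P)\neq0$, as required. This argument in fact shows that $\pi_Q^{[H]}$ is faithful on $C([H])_{central}$, and uses only the connectedness of $H$, not that of $[H]$.

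The main difficulty will be the character conjecture, and within it the identification $C([H])_{central}\subset M_2(C(H)_{central})$ with honest $H$-class functions as matrix entries: this requires the explicit description of $Irr([H])$ from Proposition 5.4 together with a careful bookkeeping of characters in the model $C([H])\subset M_2(C(H))$. Once that is in hand, the character conjecture reduces transparently to its classical counterpart for the connected group $H$. The remaining input -- the comparison of word metrics on $Irr([H])$ and $Irr(H)$ used for growth -- is routine, and follows the same decomposition of tensor powers.
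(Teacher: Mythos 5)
Your proposal is correct and follows essentially the same route as the paper: transporting the three conjectures from the connected classical group $H$ to $[H]$ via the matrix model $C([H])\subset M_2(C(H))$, the identification $Irr([H])\simeq Irr_0(H)\sqcup Irr_1(H)$, and the formula $[\Gamma]_Q=[\Gamma_Q]$. The only cosmetic differences are that you fill in the entrywise bookkeeping for the character conjecture, which the paper leaves implicit, and you deduce coamenability of $[H]$ from polynomial growth (or from the crossed product embedding) rather than from the coamenability of $O_N^*$ and its subgroups -- all equivalent and valid.
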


\begin{proof}
By using Proposition 4.1, we know that the conjectures hold for $H\subset U_N$. The idea will be that of ``transporting'' these results, via $H\to [H]$:

(1) Characters. We can pick here a maximal torus $T=\Gamma_Q$ for the compact group $H\subset U_N$, and by using the formula $[\Gamma]_Q=[\Gamma_Q]=[T]$ from Proposition 5.4 above, we obtain the result, via the identification in Proposition 5.3.

(2) Amenability. There is nothing to be proved here, because $O_N^*$ is coamenable, and so are all its quantum subgroups. Note however, in relation with the comments made in section 3 above, that in the connected case, the Kesten measures of $G,[T]$ are intimately related. For some explicit formulae here, for $G=O_N^*$ itself, see \cite{bsp}.

(3) Growth. Here the situation is similar to the one for the amenability conjecture, because by Proposition 5.3 above, $[H]$ has polynomial growth.
\end{proof}

The above result is waiting for a number of extensions. First, we believe that the connectivity assumption on $H$ should simply follow from the connectivity of $[H]$, and so, that this assumption can be dropped. With this result in hand, and by using as well Proposition 4.1 and Proposition 5.2, we would have the conjectures for any $G\subset O_N^*$.

Interesting as well would be to have a full extension of the classical results, with a statement covering all the closed subgroups $G\subset U_N$. This looks possible, some general complex half-liberation theory being available from \cite{bbi}.

\section{Tannakian aspects}

In this section we present a systematic Tannakian approach to our various conjectures. Our starting point is the following result, coming from Woronowicz's work in \cite{wo2}:

\begin{proposition}
Given an inclusion $G\subset O_N^+$, with the corresponding fundamental corepresentations denoted $u\to w$, we have the following formula:
$$C(G)=C(O_N^+)\Big/\Big(T\in Hom(u^{\otimes k},u^{\otimes l}),\forall k,l\in\mathbb N,\forall T\in Hom(w^{\otimes k},w^{\otimes l})\Big)$$
A similar result holds in the unitary case, by assuming that $k,l$ are ``colored'' integers, with the tensor powers $v^{\otimes k},v^{\otimes l}$ being obtained by tensoring $v,\bar{v}$.
\end{proposition}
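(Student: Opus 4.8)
The plan is to recall the Tannaka--Krein duality for compact quantum groups, in the functional-analytic form established by Woronowicz in \cite{wo2}, and then to apply it to the universal quantum group $O_N^+$, whose Tannakian category is the well-understood category of Temperley--Lieb diagrams. Concretely, let $\mathcal{C}$ denote the concrete tensor category whose objects are the tensor powers $w^{\otimes k}$ of the fundamental corepresentation $w$ of $G$, with morphism spaces $Hom(w^{\otimes k},w^{\otimes l})$ computed inside the relevant matrix algebras over $C(G)$. Woronowicz's reconstruction theorem says that $G$ is entirely determined by $\mathcal{C}$: the algebra $C(G)$ can be recovered as a universal algebra generated by the entries of a matrix $u$ subject to exactly the relations $T\in Hom(u^{\otimes k},u^{\otimes l})$ for all $T\in Hom(w^{\otimes k},w^{\otimes l})$. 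Since $C(O_N^+)$ is itself the universal algebra attached in this way to the Temperley--Lieb category (the category generated by the single morphism given by the bilinear form fixing $w^t=w^{-1}$), imposing the \emph{additional} Tannakian relations coming from $G$ on top of $C(O_N^+)$ yields precisely $C(G)$. This is the content of the displayed formula.

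The steps, in order, would be: (i) State the Tannaka--Krein duality of \cite{wo2} in the precise form needed, namely that for $G\subset O_N^+$ the Hopf algebra $C(G)$ is the quotient of $C(O_N^+)$ by the relations forcing each $T\in Hom(w^{\otimes k},w^{\otimes l})$ to intertwine the corresponding tensor powers of the fundamental matrix; (ii) Check that the quotient on the right-hand side of the displayed formula is indeed a well-defined compact quantum group, i.e.\ that the ideal generated by those intertwiner relations is a Hopf $*$-ideal --- this follows because the relations $T\in Hom$ are stable under $\Delta,\varepsilon,S$, a routine verification using that the $Hom$-spaces form a tensor category closed under the categorical operations; (iii) Produce the two mutually inverse surjections between $C(G)$ and this quotient: one direction is the defining quotient map $C(O_N^+)\to C(G)$, which kills all the relations by construction, hence factors through the quotient; the other direction requires that the quotient algebra has ``enough'' corepresentation theory, which is exactly where Woronowicz's theorem enters, guaranteeing that the quotient's Tannakian category is no larger than $\mathcal{C}$. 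Finally (iv) Treat the unitary case: here $O_N^+$ is replaced by $U_N^+$, the fundamental corepresentation $v$ is no longer self-conjugate, so one must work with the free monoid on two ``colors'' $\{\circ,\bullet\}$, form the colored tensor powers $v^{\otimes k}$ by tensoring copies of $v$ and $\bar v$ according to the word $k$, and run the identical argument with $Hom(v^{\otimes k},v^{\otimes l})$ in place of $Hom(w^{\otimes k},w^{\otimes l})$.

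The main obstacle is step (iii), and more precisely the nontrivial half of Tannaka--Krein duality: showing that \emph{no information is lost} in passing to the category, i.e.\ that a compact quantum group is not merely mapped to but is genuinely reconstructed from its category of finite-dimensional corepresentations together with the fiber functor. The easy inclusion --- that $C(G)$ is a quotient of the displayed universal algebra --- is immediate, since the intertwiner relations certainly hold in $C(G)$. The reverse inclusion is the deep part: one needs that the universal algebra built from $\mathcal{C}$ does not accidentally have extra corepresentations or extra intertwiners beyond those prescribed, which is precisely Woronowicz's theorem in \cite{wo2}. For the purposes of this paper I would simply invoke that theorem rather than reprove it, since it is entirely standard; the only genuine content to spell out here is the bookkeeping that identifies $C(O_N^+)$ (resp.\ $C(U_N^+)$) with the universal algebra of the Temperley--Lieb (resp.\ free colored) category, so that adding $G$'s relations on top corresponds to intersecting Tannakian categories --- and the colored-integer formalism in the unitary case, which is notational rather than conceptual.
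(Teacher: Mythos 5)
Your proposal is correct and follows essentially the same route as the paper, which simply invokes Woronowicz's Tannaka--Krein duality from \cite{wo2} (and points to \cite{mal} for a short modern proof); your expanded outline of the easy quotient direction versus the deep reconstruction direction is an accurate unpacking of that citation.
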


\begin{proof}
This follows indeed from \cite{wo2}. For a short, recent proof, see \cite{mal}.
\end{proof}

Regarding now the tori, at this level of generality, we have the following result:

\begin{proposition}
The intertwining formula $T\in Hom(u^{\otimes k},u^{\otimes l})$, with $u=QvQ^*$, where $v=diag(g_1,\ldots,g_N)$, is equivalent to the collection of conditions
$$(T^Q)_{j_1\ldots j_l,i_1\ldots i_k}\neq0\implies g_{i_1}\ldots g_{i_k}=g_{j_1}\ldots g_{j_l}$$
one for each choice of the multi-indices $i,j$, where $T^Q=(Q^*)^{\otimes l}TQ^{\otimes k}$. 
\end{proposition}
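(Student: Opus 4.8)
The plan is to strip off the conjugating matrix $Q$, reducing the condition to an intertwining relation for the diagonal corepresentation $v=diag(g_1,\ldots,g_N)$, and then to read that relation off entry by entry. First I would use $u^{\otimes k}=Q^{\otimes k}v^{\otimes k}(Q^*)^{\otimes k}$, and likewise with $l$, to rewrite the relation $Tu^{\otimes k}=u^{\otimes l}T$; multiplying on the left by $(Q^*)^{\otimes l}$ and on the right by $Q^{\otimes k}$, and using that $Q$ is unitary so that $(Q^*)^{\otimes k}Q^{\otimes k}=1$, this becomes exactly $T^Qv^{\otimes k}=v^{\otimes l}T^Q$, with $T^Q=(Q^*)^{\otimes l}TQ^{\otimes k}$. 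Thus $T\in Hom(u^{\otimes k},u^{\otimes l})$ if and only if $T^Q\in Hom(v^{\otimes k},v^{\otimes l})$.

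Next I would expand both sides of $T^Qv^{\otimes k}=v^{\otimes l}T^Q$ in coordinates. Since $v^{\otimes k}$ is the diagonal matrix over the group algebra with $(v^{\otimes k})_{i_1\ldots i_k,\,i_1'\ldots i_k'}=\delta_{i_1i_1'}\cdots\delta_{i_ki_k'}\,g_{i_1}\cdots g_{i_k}$, the $(j,i)$ entry of the left-hand side is $(T^Q)_{j_1\ldots j_l,\,i_1\ldots i_k}\,g_{i_1}\cdots g_{i_k}$, while that of the right-hand side is $g_{j_1}\cdots g_{j_l}\,(T^Q)_{j_1\ldots j_l,\,i_1\ldots i_k}$. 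Hence the intertwining relation is equivalent to the family of identities $(T^Q)_{j,i}\big(g_{i_1}\cdots g_{i_k}-g_{j_1}\cdots g_{j_l}\big)=0$ in the group algebra, one for each pair of multi-indices $i,j$.

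Finally, since the group elements form a linearly independent family in the group algebra (equivalently, the canonical map from the group algebra into $C^*(\Gamma_Q)$ is injective), each such identity says precisely that either the scalar $(T^Q)_{j,i}$ vanishes or $g_{i_1}\cdots g_{i_k}=g_{j_1}\cdots g_{j_l}$, which is exactly the stated implication. The computation is essentially bookkeeping with $Q$ and tensor indices, and the only point deserving a word of care is this last one: one must pass from $\lambda g=\lambda h$ with $\lambda\neq0$ to $g=h$, which is where linear independence of the group basis is used.
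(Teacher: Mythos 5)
Your argument is correct and follows the same route as the paper: reduce to $Q=1$ by conjugating (so that the relation becomes $T^Q\in Hom(v^{\otimes k},v^{\otimes l})$), then read off the diagonal intertwining relation entrywise to get $(T^Q)_{j,i}(g_{i_1}\cdots g_{i_k}-g_{j_1}\cdots g_{j_l})=0$. The only cosmetic difference is your final appeal to linear independence of group elements, which is slightly more than needed, since $\lambda g=\lambda h$ with $\lambda$ a nonzero scalar already gives $g=h$ by dividing by $\lambda$.
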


\begin{proof}
Observe first that, by conjugating by $Q$, we have the following formula:
$$T\in Hom(u^{\otimes k},u^{\otimes l})\iff T^Q\in Hom(v^{\otimes k},v^{\otimes l})$$

Thus, it is enough to prove the result at $Q=1$. And here, with standard multi-index notations, including the convention $g_i=g_{i_1}\ldots g_{i_k}$, the computation goes as follows:
\begin{eqnarray*}
T\in Hom(u^{\otimes k},u^{\otimes l})
&\iff&Tu^{\otimes k}e_i=u^{\otimes l}Te_i,\forall i\\
&\iff&Te_i\otimes g_i=u^{\otimes l}\sum_jT_{ji}e_j,\forall i\\
&\iff&\sum_jT_{ji}e_j\otimes g_i=\sum_jT_{ji}e_j\otimes g_j,\forall i\\
&\iff&T_{ji}g_i=T_{ji}g_j,\forall i,j\\
&\iff&[T_{ji}\neq0\implies g_i=g_j],\forall i,j
\end{eqnarray*}

Thus we have obtained the relation in the statement, and we are done.
\end{proof}

In principle Propositions 6.1 and 6.2 give all the needed ingredients for a Tannakian approach to our conjectures. Obviously, there is a lot of work to be done here.

Let us discuss now the easy case, where more concrete results can be obtained. We use the framework of \cite{twe}. Let $P(k,l)$ be the set of partitions between an upper row of $k$ points, and a lower row of $l$ points, with each leg colored black or white, and with $k,l$ standing for the corresponding ``colored integers''. We have then:

\begin{definition}
A category of partitions is a collection of sets $D=\bigcup_{kl}D(k,l)$, with $D(k,l)\subset P(k,l)$, which contains the identity, and is stable under:
\begin{enumerate}
\item The horizontal concatenation operation $\otimes$.

\item The vertical concatenation $\circ$, after deleting closed strings in the middle.

\item The upside-down turning operation $*$ (with reversing of the colors).
\end{enumerate}
\end{definition}

As explained in \cite{mal}, \cite{twe}, such categories produce quantum groups. To be more precise, associated to any partition $\pi\in P(k,l)$ is the following linear map:
$$T_\pi(e_{i_1}\otimes\ldots\otimes e_{i_k})=\sum_{j:\ker(^i_j)\leq\pi}e_{j_1}\otimes\ldots\otimes e_{j_l}$$

Here the kernel of a multi-index $(^i_j)=(^{i_1\ldots i_k}_{j_1\ldots j_l})$ is the partition obtained by joining the sets of equal indices. With this construction in hand, we have:

\begin{definition}
A compact quantum group $G\subset U_N^+$ is called easy when
$$Hom(u^{\otimes k},u^{\otimes l})=span\left(T_\pi\Big|\pi\in D(k,l)\right)$$
for any $k,l$, for a certain category of partitions $D\subset P$.
\end{definition}

In other words, the easiness condition states that the Schur-Weyl dual of $G$ comes in the simplest possible way: from partitions. As a basic example, according to an old result of Brauer \cite{bra}, the group $G=U_N$ is easy, with $D=P_2$ being the category of color-matching pairings. Easy as well is $U_N^+$, with $D=NC_2\subset P_2$ being the category of noncrossing color-matching pairings. See \cite{bsp}, \cite{bv3}, \cite{fre}, \cite{mal}, \cite{rw2}, \cite{twe}.

With these conventions, we have the following result:

\begin{theorem}
In the uncolored case, the intertwining formula $T_\pi\in Hom(u^{\otimes k},u^{\otimes l})$, with $u=QvQ^*$, where $v=diag(g_1,\ldots,g_N)$, is equivalent to
$$\delta_\pi^Q\binom{i_1\ldots i_k}{j_1\ldots j_l}\neq 0\implies g_{i_1}\ldots g_{i_k}=g_{j_1}\ldots g_{j_l}$$
with the generalized Kronecker symbols being given by $\delta_\pi^Q=\prod_{\beta\in\pi}\delta_\beta^Q$, with:
$$\delta_{1^r_p}^Q\binom{i_1\ldots i_r}{j_1\ldots j_p}=\sum_sQ_{si_1}\ldots Q_{si_r}\bar{Q}_{sj_1}\ldots\bar{Q}_{sj_p}$$
A similar result holds is the colored case, with the convention $g_\bullet=g^{-1}$.
\end{theorem}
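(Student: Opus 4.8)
The plan is to deduce Theorem 6.6 from Proposition 6.2, the only extra ingredient being an explicit computation of the conjugated map $T_\pi^Q=(Q^*)^{\otimes l}T_\pi Q^{\otimes k}$ in terms of the partition $\pi$. I would begin, in the uncolored case, by recording that the matrix entries of $T_\pi$ are the plain generalized Kronecker symbols, namely $(T_\pi)_{j_1\ldots j_l,\,i_1\ldots i_k}=\delta_\pi\binom{i_1\ldots i_k}{j_1\ldots j_l}$, which is $1$ when $\ker\binom{i}{j}\le\pi$ and $0$ otherwise; equivalently $\delta_\pi=\prod_{\beta\in\pi}\delta_\beta$, where $\delta_\beta$ tests that all indices attached to the legs of a block $\beta$ agree.

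Next I would expand, using $Q^{\otimes k}e_i=\sum_a Q_{a_1i_1}\cdots Q_{a_ki_k}\,e_a$ and the dual formula for $(Q^*)^{\otimes l}$,
$$(T_\pi^Q)_{j,i}=\sum_{a_1\ldots a_k}\ \sum_{b_1\ldots b_l}\bar Q_{b_1j_1}\cdots\bar Q_{b_lj_l}\;\delta_\pi\binom{a_1\ldots a_k}{b_1\ldots b_l}\;Q_{a_1i_1}\cdots Q_{a_ki_k}.$$
The key observation is that the condition $\ker\binom{a}{b}\le\pi$ forces the inner indices $a,b$ to be constant along each block of $\pi$, so the double sum factors as a product over the blocks $\beta\in\pi$, each block leaving a single surviving summation index $s$. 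For a block $\beta$ with $r$ upper legs carrying the outer indices $i_{p_1},\ldots,i_{p_r}$ and $p$ lower legs carrying $j_{q_1},\ldots,j_{q_p}$, its contribution is exactly
$$\sum_s Q_{si_{p_1}}\cdots Q_{si_{p_r}}\,\bar Q_{sj_{q_1}}\cdots\bar Q_{sj_{q_p}}=\delta^Q_{1^r_p}\binom{i_{p_1}\ldots i_{p_r}}{j_{q_1}\ldots j_{q_p}},$$
whence $(T_\pi^Q)_{j,i}=\prod_{\beta\in\pi}\delta^Q_\beta=\delta^Q_\pi\binom{i}{j}$. Combining this identity with the equivalence $T_\pi\in Hom(u^{\otimes k},u^{\otimes l})\iff T_\pi^Q\in Hom(v^{\otimes k},v^{\otimes l})$ and the entrywise criterion of Proposition 6.2, both applied to the specific intertwiner $T=T_\pi$, yields precisely the stated implication.

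For the colored case I would rerun the same argument while carrying the colors: a white leg contributes the matrix $\bar v$ in place of $v$, hence, after conjugating by $Q$ and restricting to the diagonal $v=diag(g_1,\ldots,g_N)$, the group element $g^{-1}$ rather than $g$, in agreement with the convention $g_\bullet=g^{-1}$; correspondingly, the roles of $Q$ and $\bar Q$ inside each factor of $\delta^Q_\beta$ get interchanged according to the color of the leg in question. The block-decoupling of the inner sum is completely unaffected, so the colored statement comes out verbatim. The only genuinely delicate point throughout is the bookkeeping in that factorization step — one must verify that the leg-to-index assignment built into $T_\pi$ matches exactly the placement of the $Q_{si}$ factors (upper legs) against the $\bar Q_{sj}$ factors (lower legs) in $\delta^Q_\beta$, and, in the colored setting, that the color flips are propagated consistently on both sides — but this is routine, and it is where essentially all the content of the proof is concentrated.
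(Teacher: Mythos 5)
Your proposal is correct and follows essentially the same route as the paper: expand $(T_\pi^Q)_{j,i}$ as the double sum $\sum_{s,t}\delta_\pi\binom{s}{t}Q_{s_1i_1}\cdots Q_{s_ki_k}\bar Q_{t_1j_1}\cdots\bar Q_{t_lj_l}$, factor it over the blocks of $\pi$ to identify the entries with $\delta_\pi^Q$, and conclude via the criterion of Proposition 6.2. Your write-up is in fact slightly more explicit than the paper's, which leaves the block-decoupling and the final appeal to Proposition 6.2 largely implicit.
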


\begin{proof}
With multi-index notations, as in the proof of Proposition 6.2, we have:
\begin{eqnarray*}
T_\pi^Q(e_i)
&=&(Q^*)^{\otimes l}T_\pi Q^{\otimes k}e_i\\
&=&\sum_s(Q^*)^{\otimes l}T_\pi(Q^{\otimes k})_{si}e_s\\
&=&\sum_s(Q^*)^{\otimes l}(Q^{\otimes k})_{si}\sum_t\delta_\pi\binom{s}{t}e_t\\
&=&\sum_{stj}\delta_\pi\binom{s}{t}(Q^{\otimes k})_{si}((Q^*)^{\otimes l})_{jt}e_j
\end{eqnarray*}

Thus, with full indices now, we have the following formula:
$$(T_\pi^Q)_{j_1\ldots j_l,i_1\ldots i_k}=\sum_{s_1\ldots s_k}\sum_{t_1\ldots t_l}\delta_\pi\binom{s_1\ldots s_k}{t_1\ldots t_l}Q_{s_1i_1}\ldots Q_{s_ki_k}\bar{Q}_{t_1j_1}\ldots\bar{Q}_{t_lj_l}$$

Since this quantity is multiplicative with respect to the blocks of $\pi$, by decomposing over these blocks, we obtain the formula in the statement.
\end{proof}

Observe that the above result generalizes the computation in Theorem 2.6. In view of the similarities with Proposition 2.3, one interesting question, that we would like to raise here, is that of extending Theorem 6.5, as to cover Proposition 2.3 as well.

At a more concrete level now, the orthogonal easy quantum groups were classified by Raum and Weber in \cite{rw2}. Without getting into details here, let us mention that:
\begin{enumerate}
\item The classical examples are covered by Proposition 4.1. The free examples consist of the quantum groups $S_N^+,H_N^+,O_N^+$ from Theorem 4.4, and then of $S_N'^+,B_N^+$ and $B_N'^+,B_N''^+$, which appear as free versions of $S_N'=S_N\times\mathbb Z_2,B_N\simeq O_{N-1}$ and of $B_N'\simeq O_{N-1}\times\mathbb Z_2$, taken twice, where the methods for $S_N^+,O_N^+$ apply. 

\item Then, we have a number of half-liberations, covered by Theorem 5.5, and an uncountable family, constructed in \cite{rw1}. For this latter family we can use the diagonal group dual subgroup $\widehat{\Gamma}_1$, and by using the crossed product picture in \cite{rw1} we conclude that our various conjectures hold indeed.

\item Finally, we have a last series, constructed in \cite{rw2}. Here the quantum groups are not coamenable, and nor are their diagonal group dual subgroups $\widehat{\Gamma}_1$, so the amenability and growth conjectures are both satisfied. The remaining problem regards the character conjecture, and we have no results here.
\end{enumerate}

In the easy unitary case, where the classification so far is only available in the classical and free cases \cite{twe}, the situation is quite unclear. Regarding amenability, an idea here would be that of trying to relate the Kesten coamenability of $G$ to the random walk on the various groups $\Gamma_Q$, with the restrictions on these latter random walks coming from the formula in Theorem 6.5. However, this looks like a quite technical task.

Finally, we believe that the present conjectures, and the maximal torus philosophy in general, can be of help in connection with certain rigidity questions in noncommutative geometry. As mentioned in section 1, the motivating remark from \cite{bbd} was the fact that a non-classical group dual cannot act on a compact connected Riemannian manifold. Thus, if a quantum group $G\subset U_N^+$ has the property that at least one of its subgroups $\widehat{\Gamma}_Q$ is non-classical, then $G$ cannot act on a compact connected Riemannian manifold.

In view of \cite{gjo}, where the rigidity conjecture was proved in the general case, for any non-classical subgroup $G\subset U_N^+$, all this is obsolete. However, and here comes our point, we believe that the original philosophy in \cite{bbd} can applied to a wider range of rigidity questions, where some of the tools from \cite{gjo} might not be available. For a study of the half-classical case, based on \cite{bs2}, \cite{bi2}, we refer to the recent article \cite{ban}.

\end{document}